\documentclass[12pt]{amsart}
\usepackage[utf8]{inputenc}
\usepackage[margin=1in]{geometry}
\usepackage[english]{babel}

\usepackage{graphicx}
\usepackage{amsmath}
\usepackage{amssymb}
\usepackage{appendix}
\usepackage{amsthm}
\usepackage{booktabs}
\usepackage{graphicx}
\usepackage{subcaption}
\usepackage{comment}
\usepackage{enumerate}


\usepackage[
    backend=biber,
      style=ieee,
    sorting=nyt,
    dashed=false
  ]{biblatex}
\addbibresource{biblio.bib}
\usepackage{hyperref}
\usepackage[hyphenbreaks]{breakurl}
\usepackage{xurl}
\hypersetup{
    colorlinks=true,
    linkcolor=blue,
    citecolor=red,
    breaklinks=true
}

\newtheorem{theorem}{Theorem}[section]
\newtheorem{corollary}[theorem]{Corollary}
\newtheorem{prop}[theorem]{Proposition}
\newtheorem{lemma}[theorem]{Lemma}
\newtheorem{example}[theorem]{Example}

\theoremstyle{definition}

\theoremstyle{remark}
\newtheorem{remark}[theorem]{Remark}

\newtheoremstyle{proofboldstyle}
  {}{}{}{}{\bfseries}{.}{.5em}{{\thmname{#1 }}{\thmnumber{#2}}{\thmnote{#3}}}
\theoremstyle{proofboldstyle}

\usepackage{amsmath}
\usepackage{amssymb}
\usepackage{mathrsfs}
\usepackage{esint}

\newcommand{\R}{{\mathbb R}} 
\newcommand{\Rdd}{\mathbb{R}^{2d}}

\newcommand{\E}{{\mathcal{E}}}
\newcommand{\F}{{\mathcal{F}}}
\newcommand{\Fw}{\mathcal{F}_{\mathrm{W}}}
\newcommand{\Fs}{\mathcal{F}_{\sigma}}
\newcommand{\LtRd}{L^2(\mathbb{R}^d)}

\newcommand{\Rd}{\mathbb{R}^{d}}
\newcommand{\cS}{{\mathcal{S}}}

\newcommand{\Sp}{\mathcal{S}^p}
\newcommand{\tr}{{\text{tr}}}

\usepackage[dvipsnames]{xcolor}
\subjclass[2020]{42B10, 47G30, 47B10}
\title[Fourier restriction for operators and functions]{Fourier restriction for Schatten class operators and functions on phase space}
\author{Franz Luef}
\address{Department of Mathematical Sciences,
         Norwegian University of Science and Technology,
         Trondheim, Norway}

\email{franz.luef@ntnu.no}

\author{Helge J. Samuelsen}
\address{Department of Mathematical Sciences,
         Norwegian University of Science and Technology,
         Trondheim, Norway}
\email{helge.j.samuelsen@ntnu.no}
\date{\today}

\begin{document}

\begin{abstract}
We formulate a variant of Fourier restriction for operators in Schatten classes, where the Fourier-Wigner transform of a bounded operator replaces the Fourier transform of a function. The Fourier-Wigner transform is closely related to the group Fourier transform of the Heisenberg group. The first result shows that Fourier-Wigner restriction for Schatten class operators is equivalent to the restriction of the symplectic Fourier transform of functions on phase space. We deduce various Schatten class results for the quantum Fourier extension operator and answer a conjecture by Mishra and Vemuri concerning the Weyl transform of measures in the affirmative. 
\end{abstract}
\maketitle
\section{Introduction}\label{sec:Intro}
Fourier restriction has attracted a lot of attention over the years and has shown to be intimately related to various challenging problems in harmonic analysis, see \cite{Tao}. The question Feffermann and Stein put forward in \cite{Fefferman} was whether restricting the Fourier transform of $L^p$-functions to the unit sphere gives a well-defined bounded operator, the Fourier restriction operator, on the class of $L^p$-spaces. More generally, given a measure on the Euclidean space $\R^n$ for what exponents $p,q\in[1,\infty]$ does the Fourier restriction operator define a bounded operator from $L^p(\R^n)$ to $L^q(\mu)$?

Stein's Fourier restriction conjecture states that the Fourier restriction operator to the unit sphere is a bounded operator from $L^p(\R^{n})$ to $L^q(\mathbb{S}^{n-1})$ for $p$ and $q$ in the range $1\leq p< \frac{2n}{n+1}$ and $1\leq q\leq p'\frac{n-1}{n+1}$.
A celebrated theorem by Tomas and Stein \cite{Schlag,Tomas} shows that the Fourier restriction operator is indeed bounded from $L^p(\R^n)$ to $L^2(\mathbb{S}^{n-1})$ for $p\leq(2n+2)/(n+3)$, and Knapp's example demonstrates that this range of exponents is sharp. The case $q\neq 2$ is not yet fully solved for $n\geq3$. The conjecture in dimension $n=2$ was first proved by Zygmund \cite{Zygmund} for the unit circle, and can also be found in \cite{Demeter,Schlag} for more general curves.

Several techniques have been developed in an attempt to solve the conjecture. Notable examples are the $\varepsilon$-removal trick of Tao \cite{Tao}, multilinear to linear Fourier restriction estimates by Bourgain and Guth \cite{Bourgain_multi_to_lin} (where they utilise the Bennet-Carbery-Tao multilinear restriction estimates \cite{Tao_multi}), and the polynomial method of Guth \cite{Guth_poly1,Guth_poly2}.

Another important technique is that of decoupling, first introduced by Wolff \cite{Wolff_decoup}, and further extended upon by Bourgain and Demeter \cite{Bourgain_Demeter_decoup}. It has seen applications in Fourier restriction, for instance by \L aba and Wang to prove existence of measure with near optimal restriction properties \cite{Laba_Wang}, as well as playing a crucial role in the proof of the Vinogradov mean value theorem \cite{Bourgain_Demeter_Guth}.

Our main motivation is to propose a formulation of the Fourier restriction problem for operators in the Schatten classes $\Sp$ which one might think of as a restriction problem for the Fourier-Wigner transform \cite{Folland} or the Fourier-Weyl transform \cite{Werner}. The Fourier-Wigner transform is closely related to the group Fourier transform $\Fw$ of the Heisenberg group. A key observation is the equivalence of this operator variant of the Fourier restriction problem with the one of the symplectic Fourier transform $\Fs$ for functions on phase space $\Rdd$. 

The framework is based on the representation theory of the Heisenberg group, in particular, the Schr\"odinger representation of phase space $\Rdd$ and different quantization schemes, such as Weyl quantization and Berezin quantization or $\tau$-quantization. In \cite{Werner} this setting was used to build a theory of harmonic analysis for operators, called quantum harmonic analysis, and the methods and tools from quantum harmonic analysis are at the core of all our arguments. Recent developments in time-frequency analysis have linked quantum harmonic analysis with the theory of localization operators, time-frequency representations, and have led to the introduction of notions such as the Cohen class of an operator. The latter is essential in our approach.

Our definition of the Fourier-Wigner restriction problem for Schatten class operators is based on the representation theory of the Heisenberg group and one might want to notice that the inverse group Fourier transform of the Heisenberg group gives a whole family of Fourier-Wigner transforms, which are all unitarily equivalent after using a suitable dilation of the Heisenberg group. We discuss this in more detail in Section \ref{se:background}. Thus, one might also recast our results as restriction theorems for the Heisenberg group. We will phrase our results for one ``fiber" of the group Fourier transform of the Heisenberg group since treating it in full generality does not add to the problem at hand and the statements become more transparent. 

The main takeaway of our paper is the equivalence of the Fourier restriction theorem on phase space of functions with the Fourier-Wigner restriction of operators. The equivalence between statements in classical harmonic analysis and those in quantum harmonic analysis has also been noticed by Fulsche and Rodriguez-Rodriquez for spectral synthesis \cite{Fulsche}, and for $\ell^p$-decoupling theorems by the second author \cite{Samuelsen}.

Fourier restriction for operators has been addressed in two recent publications \cite{Hong,Mishra}. There is some overlap between the results of these contributions to the subject and those put forward here. Still, the methods underlying the approaches in \cite{Hong,Mishra} are completely different from ours, and our main results are more general. In particular, our proofs rely on basic results from quantum harmonic analysis, like the Werner-Young inequality \cite{Luef,Werner}.

We denote by $\rho$ the Schr\"odinger representation of $\Rdd$, where for $(x,\omega)\in\Rdd$ and $g\in\LtRd$\[\rho(x,\xi)g(t)=e^{-\pi i x\cdot \xi}e^{2\pi it\cdot\xi}g(t-x).\] 
The Fourier-Wigner transform, also called the Fourier-Weyl transform, of a trace class operator $S$ on $\LtRd$ is defined as
\begin{equation}\label{eq:FourierWigner}
    \Fw (S)(z)=\mathrm{tr}(\rho(z)^*S),
\end{equation}
where $\mathrm{tr}$ is the standard semifinite trace on $\mathcal{B}(\LtRd)$. The most suitable Fourier transform on the phase space $\Rdd$ is the symplectic Fourier transform 
\begin{equation*}
    \Fs (G)(z)=\iint_{\Rdd}G(z^\prime)e^{-2\pi i\sigma(z,z^\prime)}dz^\prime,
\end{equation*}
where $\sigma$ denotes the standard symplectic form on $\Rdd$. Our main result is the following equivalence theorem.
\begin{theorem}\label{equi-Thm}
Suppose $\mu$ is a compactly supported Radon measure on $\R^{2d}$, and $1\leq p,q\leq \infty$. Then the following statements are equivalent:
\begin{enumerate}[i)]
    \item There exists a constant $C_\sigma>0$ such that for any $G\in L^p(\R^{2d})$,
    \begin{equation*}
        \|\F_\sigma(G)\|_{L^{q}(\mu)}\leq C_\sigma\|G\|_{L^p(\R^{2d})}.
    \end{equation*}
    \item There exists a constant $C_W>0$ such that for any $S\in \cS^p$,
    \begin{equation*}
        \|\F_W(S)\|_{L^{q}(\mu)}\leq C_W\|S\|_{\cS^p}.
    \end{equation*}
\end{enumerate}
\end{theorem}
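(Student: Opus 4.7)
The plan is to exploit the duality between functions on phase space and operators provided by quantum harmonic analysis. The two key ingredients are the Werner-Young inequality, which controls the mixed convolutions of operators with functions (yielding operators) and of operators with operators (yielding functions) in terms of $L^p$ and Schatten norms, together with the intertwining identities $\Fw(f \star S) = \Fs(f)\cdot\Fw(S)$ and $\Fs(S \star T) = \Fw(S)\cdot\Fw(T)$, which convert a restriction estimate for one transform into a restriction estimate for the other once one multiplies by a suitably chosen ``mollifier'' whose Fourier--Wigner transform does not vanish on $\mathrm{supp}(\mu)$.

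For i) $\Rightarrow$ ii), I would fix $S \in \Sp$ and a rank-one projection $P = \varphi \otimes \varphi$ with $\varphi$ a normalized Gaussian; then $\Fw(P)$ is a Gaussian on $\Rdd$, hence bounded below by some $c>0$ on the compact set $\mathrm{supp}(\mu)$. The Werner--Young inequality gives
\[
\|S \star P\|_{L^p(\Rdd)} \leq \|S\|_{\Sp}\, \|P\|_{\cS^1} = \|S\|_{\Sp}.
\]
Applying hypothesis i) to $G := S \star P$ together with the intertwining $\Fs(S \star P) = \Fw(S)\cdot\Fw(P)$, I obtain
\[
\|\Fw(S)\cdot\Fw(P)\|_{L^q(\mu)} \leq C_\sigma \|S\|_{\Sp},
\]
and the pointwise lower bound $|\Fw(P)| \geq c$ on $\mathrm{supp}(\mu)$ yields ii) with $C_W \leq c^{-1}C_\sigma$.

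The reverse implication ii) $\Rightarrow$ i) is completely symmetric using the same Gaussian projection $P$: given $G \in L^p(\Rdd)$, form $S := G \star P \in \Sp$ with $\|S\|_{\Sp} \leq \|G\|_{L^p}$ by Werner--Young, apply hypothesis ii), invoke the intertwining $\Fw(G \star P) = \Fs(G)\cdot\Fw(P)$, and divide by $\Fw(P)$ on $\mathrm{supp}(\mu)$. The main obstacle in this strategy is essentially bookkeeping rather than analytic depth: one must choose a mollifier in $\cS^1$ whose Fourier--Wigner transform is both explicit enough to bound below on $\mathrm{supp}(\mu)$ and well-matched to the Werner--Young regime linking $L^p(\Rdd)$ and $\Sp$. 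The Gaussian projection discharges both requirements at once, since $\Fw(\varphi \otimes \varphi)$ is an everywhere-positive Gaussian on $\Rdd$ and the convolution with the trace-class operator $P$ is norm-decreasing in the relevant mixed Lebesgue--Schatten sense.
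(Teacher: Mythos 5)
Your proposal is correct and follows essentially the same strategy as the paper: convolve with a rank-one Gaussian trace-class operator, apply the Werner--Young inequality, use Werner's convolution theorem to convert $\F_\sigma$ into $\F_W$ and vice versa, and exploit the strict positivity of the Gaussian ambiguity function on the compact support of $\mu$. The only (cosmetic) difference is that the paper uses the rank-one operator $g_{z_0}\otimes g_0$ with the second window time-frequency shifted to the center $z_0$ of a ball containing $\mathrm{supp}(\mu)$, so that the lower bound $|A(g_{z_0},g_0)(z)|\geq e^{-\pi R^2/2}$ is uniform in the location of $\mathrm{supp}(\mu)$, whereas your choice $P=\varphi\otimes\varphi$ with a centered Gaussian yields a lower bound depending on how far $\mathrm{supp}(\mu)$ sits from the origin; both bounds are strictly positive since $\mu$ is compactly supported, so the argument goes through either way.
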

The operation dual to Fourier restriction is that of extension and is formally given by the Fourier extension operator: $\E_\sigma(G)=\F_\sigma(G\,d\mu)$ for the symplectic Fourier transform $\Fs$. The operator analog of this extension operator is
\begin{equation}\label{QuantumExtensionOperator}
    \E_W(G)=\F_W^{-1}(G\,d\mu)=\int G(z)\rho(z)\,d\mu(z),
\end{equation}
which we understand in the weak sense and refer to as the quantum extension operator. This is the adjoint operator of the quantum restriction operator by Proposition \ref{Duality of restriction operators}. By duality, there is the following reformulation of Theorem \ref{equi-Thm} in terms of extension operators. See for instance \cite{Demeter,Schlag,Stein,Wolff} for the classical equivalence and Proposition \ref{Res-to-Ext-quantum} for the quantum case.
\begin{theorem}\label{equi-Cor}
    Let $\mu$ be a compactly supported Radon measure on $\R^{2d}$. For $1\le p,q\le\infty$ with dual exponents $p^\prime,q^\prime$, the following statements are equivalent:
\begin{enumerate}[i)]
    \item There exists a constant $C_\sigma>0$ such that for any $G\in L^p(\R^{2d})$,
    \begin{equation*}
        \|\F_\sigma(G)\|_{L^{q}(\mu)}\leq C_\sigma\|G\|_{L^p(\R^{2d})}.
    \end{equation*}
    \item There exists a constant $C_W>0$ such that for any $T\in \cS^p$,
    \begin{equation*}
        \|\F_W(T)\|_{L^{q}(\mu)}\leq C_W\|T\|_{\cS^p}.
    \end{equation*}
    \item There exists a constant $C_\sigma>0$ such that for any $G\in L^{q'}(\mu)$,
    \begin{equation*}
        \|\E_\sigma (G)\|_{L^{p'}(\R^{2d})}\leq C_\sigma\|G\|_{L^{q'}(\mu)}.
    \end{equation*}
    \item There exists a constant $C_W>0$ such that for any $G\in L^{q'}(\mu)$,
    \begin{equation*}
        \|\E_W(G)\|_{\cS^{p'}}\leq C_W\|G\|_{L^{q'}(\mu)}.
    \end{equation*}
\end{enumerate}
\end{theorem}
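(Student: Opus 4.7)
The plan is to reduce Theorem \ref{equi-Cor} to two building blocks: Theorem \ref{equi-Thm}, which already gives (i) $\Leftrightarrow$ (ii), and the restriction-extension duality in both its classical and quantum incarnations. The four statements form a commutative square whose horizontal edges are the function/operator equivalences and whose vertical edges are dualities, so it remains only to prove (i) $\Leftrightarrow$ (iii) and (ii) $\Leftrightarrow$ (iv). Both are instances of the general Banach-space principle that a continuous linear map is bounded if and only if its Banach adjoint is bounded, with identical operator norm.

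For (i) $\Leftrightarrow$ (iii) I would invoke the classical duality between the symplectic restriction and extension operators: since $\F_\sigma$ is involutive and $\mu$ has compact support, Fubini gives
\begin{equation*}
\int_{\R^{2d}} \E_\sigma(g)(z)\, \overline{G(z)}\, dz = \int_{\R^{2d}} g(z)\, \overline{\F_\sigma(G)(z)}\, d\mu(z)
\end{equation*}
for $g \in L^{q'}(\mu)$ and $G$ in a suitable dense subclass of $L^p(\R^{2d})$. Combined with $(L^p)^* = L^{p'}$ and $L^{q'}(\mu)^* = L^q(\mu)$, taking suprema in both slots identifies the operator norms of the restriction and extension maps, giving (i) $\Leftrightarrow$ (iii) with $C_\sigma$ unchanged.

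For (ii) $\Leftrightarrow$ (iv) I would follow Proposition \ref{Res-to-Ext-quantum} and replace the function pairing by the trace pairing. Starting from $\E_W(g) = \int g(z) \rho(z)\, d\mu(z)$ and $\F_W(T)(z) = \tr(\rho(z)^* T)$, commuting trace and $\mu$-integral (justified by compact support of $\mu$) and using the cyclic identity $\tr(\rho(z) T^*) = \overline{\F_W(T)(z)}$ yields
\begin{equation*}
\tr\bigl(\E_W(g)\, T^*\bigr) = \int_{\R^{2d}} g(z)\, \overline{\F_W(T)(z)}\, d\mu(z).
\end{equation*}
Schatten duality $(\cS^p)^* \cong \cS^{p'}$ under the trace pairing then identifies $\E_W : L^{q'}(\mu) \to \cS^{p'}$ as the Banach adjoint of $T \mapsto \F_W(T)|_\mu : \cS^p \to L^q(\mu)$, whence their operator norms coincide and (ii) $\Leftrightarrow$ (iv) with $C_W$ unchanged.

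The main technical point is the endpoint cases $p=1$ and $p=\infty$, where the Schatten and $L^p$ dualities are more delicate; I would handle these by first establishing the pairing identities on dense subclasses (Schwartz functions, Schwartz-class or finite-rank operators) where all expressions converge absolutely and the exchange of trace and integral is immediate, and then extending by density or weak-$\ast$ arguments as needed. Beyond this standard endpoint care, once Theorem \ref{equi-Thm} and the quantum duality proposition are in hand, the proof is essentially formal.
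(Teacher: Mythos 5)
Your decomposition is exactly the one the paper uses: the paper declares Theorem~\ref{equi-Cor} to follow from Theorem~\ref{equi-Thm} for the equivalence~(i)~$\Leftrightarrow$~(ii), the standard classical restriction--extension duality (cited to the textbooks) for~(i)~$\Leftrightarrow$~(iii), and Proposition~\ref{Res-to-Ext-quantum} for~(ii)~$\Leftrightarrow$~(iv). Your sketch simply fills in the duality pairings the paper leaves to the references, and both pairing identities you write out are correct, so this matches the paper's argument.
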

\noindent 
We state a consequence of the preceding Theorem \ref{equi-Cor} for operators of the form \eqref{QuantumExtensionOperator} by utilising classical Fourier restriction estimates for functions.
In particular, a restriction estimate for fractal, compactly supported, probability measures yields geometric and Fourier analytic conditions which ensure Schatten class estimates for the quantum extension operator. The following theorem is a reformulation of Theorem $3.3$ in \cite{Laba}, which extends to the setting of Schatten class operators.
\begin{theorem}\label{QuantumBakTheorem}
    Let $\mu$ be a compactly supported probability measure such that for $0<\beta\leq \alpha<2d$ and constants $C_\alpha,C_\beta>0$ we have
    \begin{align*}
        \mu(B(z,r))\leq&\, C_\alpha r^\alpha\, \text{for all }\, z\in \R^{2d}\, \text{and }\, r>0,\\
        |\F_\sigma(\mu)(\zeta)|\leq&\, C_\beta (1+|\zeta|)^{-\frac{\beta}{2}}\, \text{for all }\,\zeta\in\R^{2d}.
    \end{align*}
    Then for all
    \begin{equation*}
        p'\geq  \frac{2(4d-2\alpha+\beta)}{\beta},
    \end{equation*}
    there exists $C=C(p)>0$ such that
    \begin{equation*}
        \|\E_W(G)\|_{\cS^{p'}}\leq C\|G\|_{L^{2}(\mu)},
    \end{equation*}
    for all $G\in L^2(\mu)$.
\end{theorem}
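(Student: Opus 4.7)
The plan is to reduce the desired Schatten-class estimate to its classical counterpart --- Theorem 3.3 of \cite{Laba} --- via the equivalence theorem (Theorem \ref{equi-Cor}). In this way no new harmonic-analytic content needs to be produced; everything is packaged into the equivalence.

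The first step is to reconcile the symplectic Fourier transform with the standard Euclidean one on $\R^{2d}$. A direct calculation shows $\F_\sigma(G)(z) = \widehat{G}(Jz)$, where $J(x,\omega) = (\omega,-x)$ is an orthogonal involution. Consequently the Fourier-decay condition $|\F_\sigma(\mu)(\zeta)| \leq C_\beta(1+|\zeta|)^{-\beta/2}$ is equivalent to the same decay of $|\widehat{\mu}|$, and the ball-growth condition on $\mu$ is invariant under $J$. Hence $\mu$ satisfies the hypotheses of Theorem 3.3 of \cite{Laba} in ambient dimension $n = 2d$, and for any $p' \geq 2(4d - 2\alpha + \beta)/\beta$ one obtains the classical extension bound
\begin{equation*}
    \|\F_\sigma(G\,d\mu)\|_{L^{p'}(\R^{2d})} \leq C\|G\|_{L^2(\mu)} \qquad \text{for all } G \in L^2(\mu),
\end{equation*}
which is exactly statement (iii) of Theorem \ref{equi-Cor} with $q = q' = 2$ and $p$ conjugate to $p'$.

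The second and last step is to invoke the equivalence (iii) $\Leftrightarrow$ (iv) in Theorem \ref{equi-Cor} to transfer this bound to the quantum side, yielding the required $\|\E_W(G)\|_{\cS^{p'}} \leq C\|G\|_{L^2(\mu)}$.

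Since Theorem \ref{equi-Cor} is already in hand, no new substantive difficulty arises: the entire harmonic-analytic obstruction was handled once and for all in proving the equivalence theorem, and the remainder of the argument amounts to a routine transfer together with the trivial symplectic-vs-Euclidean change of variables. The only minor care needed is in tracking the pairing of restriction exponents $(p,q)$ with their dual extension exponents $(p',q')$ appearing in Theorem \ref{equi-Cor}, and in checking that the range $0<\beta\leq\alpha<2d$ in the hypotheses aligns with the range $0<\beta\leq\alpha<n$ in \cite{Laba} under $n=2d$.
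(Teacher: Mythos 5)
Your argument is correct and is precisely the paper's intended proof: the paper presents Theorem \ref{QuantumBakTheorem} as ``a reformulation of Theorem 3.3 in \cite{Laba}'' obtained by feeding the classical fractal restriction estimate (transported from the Euclidean to the symplectic Fourier transform via the orthogonality of $J$, as codified in Lemma \ref{ExtensionOfResToSymp}) into the equivalence $\mathrm{iii)} \Leftrightarrow \mathrm{iv)}$ of Theorem \ref{equi-Cor} with $q=q'=2$. The only point worth making fully explicit, which you gesture at but do not verify, is that $p'\geq 2(4d-2\alpha+\beta)/\beta\geq 2$ (since $\alpha<2d$), so $p\leq 2$ and the exponents lie in the admissible range of Theorem \ref{equi-Cor}.
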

It turns out that the extension formulation of Fourier restriction in the setting of quantum harmonic analysis has applications to different quantization schemes. For a fixed function $G$, the Weyl symbol of the operator $\E_W(G)$ is the classical extension operator $\E_\sigma(G)$. By considering the constant function $G\equiv 1$, we can deduce Schatten class estimates for operators whose Weyl symbols are the symplectic Fourier transform of compactly supported Radon measures by combining Theorem \ref{equi-Cor} with classical restriction results. We can furthermore use a quantum harmonic Tauberian theorem proved by Luef and Skrettingland in \cite{Luef} to characterize the compactness of such operators in terms of decay conditions of the Fourier transform of the measure at infinity.

\begin{theorem}\label{Char-measure-compact}
    Let $\mu$ be a compactly supported Radon measure on $\R^{2d}$. Then the operator
    \begin{equation}\label{WeylOfFourierMeasure}
        L_{\F_\sigma(\mu)}=\int \rho(z)d\mu(z)
    \end{equation}
    is compact if and only if $\F_\sigma(\mu)\in C_0(\R^{2d})$. Moreover, $\mathcal{F}_\sigma(\mu)\in L^p(\R^{2d})$ if and only if $L_{\F_\sigma(\mu)}\in \cS^p$ for $1\leq p\leq\infty$.
\end{theorem}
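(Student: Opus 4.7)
The plan is to identify $L_{\F_\sigma(\mu)}$ as the Weyl quantisation of its symbol and then to translate between operator and symbol through quantum harmonic analysis (QHA) convolutions. Writing $L_{\F_\sigma(\mu)} = \E_W(\mathbf{1})$ with corresponding classical extension $\E_\sigma(\mathbf{1}) = \F_\sigma(\mu)$, one sees that $L_{\F_\sigma(\mu)}$ is precisely the Weyl operator with symbol $\F_\sigma(\mu)$, so every property of the operator should correspond to some property of the function $\F_\sigma(\mu)$.

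For the compactness characterisation, I would invoke the Luef--Skrettingland quantum Tauberian theorem from \cite{Luef}: a bounded operator $T$ in the appropriate class is compact if and only if the operator-operator convolution $T \star P_0$ lies in $C_0(\R^{2d})$, where $P_0 = \phi_0 \otimes \phi_0$ is the rank-one projection onto a normalised Gaussian $\phi_0$. The standard QHA identity $L_f \star P_0 = f \ast W(\phi_0)$, with $W(\phi_0)$ the Wigner distribution of $\phi_0$ (a Gaussian on $\R^{2d}$), reduces the question to whether $\F_\sigma(\mu) \ast W(\phi_0) \in C_0$ is equivalent to $\F_\sigma(\mu) \in C_0$. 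Since $\mu$ is a finite Radon measure, $\F_\sigma(\mu)$ is bounded and uniformly continuous, and $W(\phi_0) \in L^1$ has nowhere-vanishing Fourier transform; the classical Wiener Tauberian theorem then gives the equivalence.

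For the $L^p$--$\cS^p$ correspondence, I would combine a modulation-space characterisation of Schatten class Weyl operators, namely $L_f \in \cS^p \iff f \in M^p(\R^{2d})$ with equivalent norms for $1 \leq p \leq \infty$, with a Plancherel--Polya/Bernstein inequality for bandlimited functions. Since $\mu$ is compactly supported, $\F_\sigma(\mu)$ is entire of exponential type, and for such symbols the short-time Fourier transform decays rapidly in the frequency variable, forcing the comparability $\|\F_\sigma(\mu)\|_{M^p} \asymp \|\F_\sigma(\mu)\|_{L^p}$. Chaining the two equivalences produces $L_{\F_\sigma(\mu)} \in \cS^p \iff \F_\sigma(\mu) \in L^p$.

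The main obstacle is the $L^p \leftrightarrow M^p$ step for bandlimited symbols when $p \neq 2$: Weyl quantisation is an isometric bijection between $L^p$ and $\cS^p$ only at $p = 2$, so one genuinely has to exploit the bandlimited structure coming from the compact support of $\mu$. All other ingredients reduce to standard QHA convolution identities or classical Fourier analysis.
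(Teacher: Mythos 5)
Your compactness argument is on the right track and is essentially the same idea as the paper's: reduce to the Luef--Skrettingland Tauberian theorem via the identity $L_f\star(\phi_0\otimes\phi_0)=f*W(\phi_0,\phi_0)$, and use that the Gaussian window has nowhere-vanishing Fourier--Wigner transform. The paper realises this a bit differently --- it writes $L_{\F_\sigma(\mu)}$ as a localization operator $\mathcal{A}_{\F_\sigma(\nu)}^{g_0,g_0}$ with $d\nu=e^{\pi|z|^2/2}d\mu$, so that Theorem~\ref{TauberianTheorem} (stated for $F\in L^\infty(\R^{2d})$, not for operators $T\in\mathcal{B}(L^2)$) can be applied directly --- but that is essentially the same maneuver. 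You would need to cite the operator form of the Tauberian theorem rather than the function form stated here, but that version does appear in \cite{Luef}.

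The $L^p\leftrightarrow\cS^p$ part of your plan, however, has a genuine gap, and moreover you have misdiagnosed where it lies. You flag the step $L^p\leftrightarrow M^p$ for bandlimited symbols as ``the main obstacle'', but that step is fine: for a compactly supported $\mu$ the symbol $\F_\sigma(\mu)$ has Fourier transform supported in a fixed ball, the STFT is confined in the frequency variable, and a Plancherel--P\'olya-type argument indeed yields $\|\F_\sigma(\mu)\|_{M^{p,q}}\asymp\|\F_\sigma(\mu)\|_{L^p}$ for all $q$. The step that does not hold is the one you take for granted: the claimed biconditional $L_a\in\cS^p\iff a\in M^p$ with equivalent norms for all $1\le p\le\infty$. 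This is \emph{not} a theorem. The known modulation-space bounds for Weyl operators are one-sided: $a\in M^{p,p'}$ implies $L_a\in\cS^p$ only for $1\le p\le2$, and dually $L_a\in\cS^p$ implies $a\in M^{p,p'}$ only for $2\le p\le\infty$. Outside $p=2$ the sufficient and necessary conditions do not meet, and the converse implications fail for general symbols. Plugging in $M^{p,q}\asymp L^p$ for bandlimited symbols therefore only gives you $L^p\Rightarrow\cS^p$ when $p\le2$ and $\cS^p\Rightarrow L^p$ when $p\ge2$; the remaining two implications --- which are precisely what is nontrivial in Theorem~\ref{Char-measure-compact} --- are not covered. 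The paper closes this gap by a direct duality argument: it pairs $L_{\F_\sigma(\mu)}=\E_W(\mathbf{1})$ against $T\in\cS^1$, rewrites $\tr(T L_{\F_\sigma(\mu)}^*)$ via Parseval for finite measures (Lemma~\ref{Plancherelfor finite measures}) as an integral of a Cohen-class distribution $T\star(g_{z_0}\otimes g_0)$ against $\F_\sigma(\nu)$, applies H\"older and the Werner--Young inequality, and then transfers between $\|\F_\sigma(\nu)\|_{L^{p'}}$ and $\|\F_\sigma(\mu)\|_{L^{p'}}$ using Lemma~\ref{lem:ControlFourMeas}. The reverse direction is the mirror image of this, testing $\F_\sigma(\nu)$ against Schwartz functions and again invoking Werner--Young. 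That computation, not a modulation-space embedding, is what makes the equivalence hold for the full range $1\le p\le\infty$.
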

We have as an immediate consequence:
\begin{corollary}
    Assume there exists $\beta>0$ and a constant $C>0$ such that
    \[
    |\F_\sigma(\mu)(\zeta)|\leq C(1+|\zeta|)^{-\frac{\beta}{2}},
    \]
    for all $\zeta\in\R^{2d}$. Then $L_{\F_\sigma(\mu)}\in \cS^p$ for $p>4d/\beta$.
\end{corollary}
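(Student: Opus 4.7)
The plan is to reduce the claim to a standard integrability check via Theorem~\ref{Char-measure-compact}. That theorem states precisely that $L_{\F_\sigma(\mu)} \in \cS^p$ if and only if $\F_\sigma(\mu) \in L^p(\R^{2d})$, so the whole task becomes: given the pointwise decay hypothesis, show $\F_\sigma(\mu) \in L^p(\R^{2d})$ for $p > 4d/\beta$.

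Next I would combine the hypothesis with this integrability criterion. Since $|\F_\sigma(\mu)(\zeta)| \leq C(1+|\zeta|)^{-\beta/2}$, we have
\begin{equation*}
\int_{\R^{2d}} |\F_\sigma(\mu)(\zeta)|^p\, d\zeta \leq C^p \int_{\R^{2d}} (1+|\zeta|)^{-p\beta/2}\, d\zeta,
\end{equation*}
and the latter integral, evaluated in polar coordinates on $\R^{2d}$, reduces (up to a constant) to $\int_0^\infty (1+r)^{-p\beta/2} r^{2d-1}\, dr$. This is finite precisely when $p\beta/2 > 2d$, i.e.\ when $p > 4d/\beta$. Thus $\F_\sigma(\mu) \in L^p(\R^{2d})$ in exactly the stated range of exponents.

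Applying the ``moreover'' part of Theorem~\ref{Char-measure-compact} in this range gives $L_{\F_\sigma(\mu)} \in \cS^p$, which is the conclusion. There is no real obstacle here: the only substantive input is the equivalence $\F_\sigma(\mu) \in L^p \iff L_{\F_\sigma(\mu)} \in \cS^p$ supplied by Theorem~\ref{Char-measure-compact}, and the rest is the elementary observation that a polynomially decaying function on $\R^{2d}$ belongs to $L^p$ precisely when the decay exponent exceeds the dimension.
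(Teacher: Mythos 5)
Your proof is correct and matches the paper's intent: the corollary is stated as an immediate consequence of Theorem~\ref{Char-measure-compact}, and you supply exactly the elementary integrability check (polar coordinates in $\R^{2d}$, convergence iff $p\beta/2 > 2d$) that makes it so.
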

Mishra and Vemuri have also studied the operator \eqref{WeylOfFourierMeasure} in \cite{Mishra,Mishra2024} for compact hypersurfaces in $\R^{2d}$ with strictly positive Gaussian curvature and smooth measures $\mu$. They showed that for $d\geq 2$ the respective operator is compact, and for $p>d\geq 6$ the operator $L_{\F_\sigma(\mu)}\in\cS^{p}$. Furthermore, they conjectured that $L_{\F_\sigma(\mu)}\in \cS^{p}$ if and only if $p>4d/(2d-1)$. This corresponds to the same range as in the Fourier restriction conjecture on phase space.

By the concrete form of $L_{\F_\sigma(\mu)}$ where $\mu$ is the surface measure on $\mathbb{S}^{2d-1}$ and results about Laguerre functions, Mishra and Vemuri showed that $p>4d/(2d-1)$ is a necessary condition. A sufficient condition of the conjecture follows from Theorem \ref{Char-measure-compact} through the method of stationary phase, see Theorem $1.2.1$ in \cite{Sogge}. We therefore answer the conjecture by Mishra and Vemuri in the affirmative as a consequence of Theorem \ref{Char-measure-compact}.
\begin{theorem}\label{ConjectureTheorem}
    Suppose $S$ is a compact hypersurface with non-vanishing Gaussian curvature in $\R^{2d}$ equipped with a smooth measure $\mu$. Then the operator
    \[
    \int \rho(z)d\mu(z)\in \cS^{p},
    \]
    if and only if $p>4d/(2d-1)$.
\end{theorem}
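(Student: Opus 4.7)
The plan is to reduce the Schatten membership to an $L^p$-integrability question on the Fourier transform of the surface measure and then apply the sharp stationary-phase asymptotics. By Theorem~\ref{Char-measure-compact}, the operator $\int\rho(z)\,d\mu(z) = L_{\F_\sigma(\mu)}$ lies in $\cS^p$ if and only if $\F_\sigma(\mu)\in L^p(\R^{2d})$, and since $\F_\sigma$ is the Euclidean Fourier transform composed with an orthogonal change of coordinates on $\R^{2d}$, the $L^p(\R^{2d})$-norms of $\F_\sigma(\mu)$ and $\hat\mu$ coincide. The theorem therefore reduces to showing $\hat\mu\in L^p(\R^{2d})$ if and only if $p > 4d/(2d-1)$.

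For the ``if'' direction I would invoke the classical stationary phase estimate (Theorem~1.2.1 in \cite{Sogge}): the hypotheses on $S$ and $\mu$ give
$$|\hat\mu(\xi)| \leq C(1+|\xi|)^{-(2d-1)/2},$$
and polar integration on $\R^{2d}$ shows this bound lies in $L^p$ exactly when $p(2d-1)/2 > 2d$, i.e.\ $p > 4d/(2d-1)$. For the ``only if'' direction I would use the sharp asymptotic
$$\hat\mu(r\omega) = r^{-(2d-1)/2}\sum_j c_j(\omega)\,e^{-2\pi i r\,x_j(\omega)\cdot\omega} + O(r^{-(2d+1)/2}),$$
where $x_j(\omega)\in S$ are the finitely many points whose outward unit normal is parallel to $\omega$ and the amplitudes $c_j(\omega)$ (essentially $\psi(x_j(\omega))|K(x_j(\omega))|^{-1/2}$ up to a phase) are continuous and nonzero. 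Averaging $|\hat\mu(r\omega)|^2$ over $\omega\in\bS^{2d-1}$, the diagonal contributions yield a positive multiple of $r^{-(2d-1)}$, while the off-diagonal cross terms decay by an extra factor $O(r^{-(2d-1)/2})$ via a secondary stationary phase on the sphere. This gives the lower bound $\int_{\bS^{2d-1}}|\hat\mu(r\omega)|^2\,d\omega \gtrsim r^{-(2d-1)}$ for $r$ large.

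Dyadic annular integration combined with H\"older's inequality (between $L^p$ and $L^2$ for $p\ge 2$, and between $L^p$ and $L^\infty$ for $p<2$) then yields $\|\hat\mu\|_{L^p(B_{2R}\setminus B_R)} \gtrsim R^{\alpha(p)}$ with $\alpha(p)\ge 0$ whenever $p\leq 4d/(2d-1)$; summing over dyadic $R$ forces $\hat\mu\notin L^p(\R^{2d})$ in this range. For the case $S=\bS^{2d-1}$ the necessity was already obtained by Mishra and Vemuri via Laguerre function expansions. The main obstacle is the secondary stationary phase step on $\bS^{2d-1}$, where one must verify non-degeneracy of the phase $[x_j(\omega)-x_k(\omega)]\cdot\omega$ at its critical points for $j\ne k$; this is a standard computation when the Gaussian curvature of $S$ is non-vanishing.
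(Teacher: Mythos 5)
Your handling of sufficiency---reduce to $L^p$-membership of $\F_\sigma(\mu)$ via Theorem~\ref{Char-measure-compact}, then use the stationary phase upper bound from \cite{Sogge} and polar integration---is exactly the paper's route. For necessity, the paper simply cites the Mishra--Vemuri computation for the sphere $\bS^{2d-1}$ via Laguerre functions; you instead propose a self-contained lower bound derived from the two-term stationary phase asymptotics, which is a genuinely different and potentially more general route. The issue is that a central step in that route fails precisely in the model case.

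You claim that the off-diagonal cross terms lose a factor $O(r^{-(2d-1)/2})$ via a secondary stationary phase in $\omega$, and that non-degeneracy of the phase $\omega\mapsto[x_j(\omega)-x_k(\omega)]\cdot\omega$ follows from non-vanishing Gaussian curvature. That is not so. A compact hypersurface in $\R^{2d}$ with nowhere-vanishing Gaussian curvature is convex, the two stationary points $x_1(\omega)$, $x_2(\omega)$ have normals $\pm\omega$, and $[x_1(\omega)-x_2(\omega)]\cdot\omega$ is exactly the width of $S$ in direction $\omega$. For the sphere (or any smooth body of constant width) this phase is constant on $\bS^{2d-1}$, so there is no stationary-phase gain whatsoever, and the pointwise-in-$r$ lower bound $\int_{\bS^{2d-1}}|\F_\sigma(\mu)(r\omega)|^2\,d\omega\gtrsim r^{-(2d-1)}$ you assert is actually false: for the unit sphere the left side is proportional to $r^{-(2d-1)}\cos^2(2\pi r+\theta_d)$, which vanishes on a dense set of $r$. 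The correct mechanism is oscillation in $r$, not in $\omega$: the width $w(\omega)=[x_1(\omega)-x_2(\omega)]\cdot\omega$ is bounded below by the minimal width $w_0>0$ of $S$, so $\fint_R^{2R}\cos\!\left(2\pi r\,w(\omega)+\theta\right)dr=O\!\left((Rw_0)^{-1}\right)$ uniformly in $\omega$; averaging $|\F_\sigma(\mu)(r\omega)|^2$ over $r\in[R,2R]$ and $\omega\in\bS^{2d-1}$ then kills the cross terms, and the diagonal terms give the annular lower bound $\|\F_\sigma(\mu)\|^p_{L^p(B_{2R}\setminus B_R)}\gtrsim R^{2d-p(2d-1)/2}$, after which your dyadic summation goes through. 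With that repair your argument is sound, and it does establish necessity for general compact hypersurfaces with non-vanishing curvature, which is somewhat more explicit than the paper's citation of the spherical case.
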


\section{Background}\label{se:background}
\subsection{Measures}

Throughout this paper, we will consider various Borel measures on $\mathbb{R}^{2d}$. We refer to \cite{Folland-Real} for the relevant background on measure theory.

Let $\mu$ be a Borel measure on $\mathbb{R}^{2d}$. For $1\leq p<\infty$, we define the $L^p$-norm of a measurable function $F$ on $\Rdd$  with respect to $\mu$ by,
\[
\|F\|_{L^{p}(\mu)}=\left(\int_{\Rdd }|F|^pd|\mu|\right)^{1/p},
\]
and we denote by $L^p(\mu)$ the space of measurable functions $F$ such that $\|F\|_{L^p(\mu)}<\infty$. 
We extend the definition to $p=\infty$ in the natural way. We simply write  $L^p(\R^{2d})$ whenever we use the Lebesgue measure on $\R^{2d}$.

Recall that a Borel measure $\mu$ on $\Rdd$ is called finite if $|\mu|(\R^{2d})<\infty$. Furthermore, a Radon measure is a Borel measure which is finite on every compact set, inner regular on every open set and outer regular on all Borel sets. By \cite[Thm. 7.8]{Folland-Real}, we have that every Borel measure on $\R^{2d}$ that is finite on compact sets is a Radon measure. In fact, every finite measure is a Radon measure by the monotonicity property of measures.

For a finite Borel measure $\mu$ on $\Rdd$, we define the Fourier transform at a point $\zeta\in\R^{2d}$ by
\[
\F(\mu)(\zeta)=\widehat{\mu}(\zeta):=\int e^{-2\pi i \zeta\cdot z}d\mu(z),
\]
which is a continuous function on $\R^{2d}$. If $\mu$ is absolutely continuous with respect to the Lebesgue measure, then there exists by the Radon-Nikodym Theorem an  $F\in L^1(\R^{2d})$ such that $d\mu=Fdx$. In this case, we identify $\widehat{\mu}=\widehat{F}$ with the regular Fourier transform of the $L^1$-function $F$, and thus $\widehat{\mu}\in C_0(\R^{2d})$ by the Riemann-Lebesgue Lemma. However, this is not the case for an arbitrary finite Borel measure, as can be seen by the fact that $\widehat{\delta_0}=1$. Here $\delta_0$ denotes the point measure at $z=0$.

Finally, a finite Borel measure is called a Rajchman measure if $\widehat{\mu}\in C_0(\R^{2d})$. Thus, every absolutely continuous measure with respect to the Lebesgue measure is a Rajchman measure.

We denote the Schwartz class of rapidly decaying smooth functions on $\Rd$ by $\mathscr{S}(\R^d)$, and its dual space of tempered distributions by $\mathscr{S}^\prime(\R^{d})$. We use a sesquilinear dual-paring for $\mathscr{S}(\R^d)$ and $\mathscr{S}^\prime(\R^d)$ which is compatible with the $L^2(\Rd)$ inner product. That is, given $\psi\in\mathscr{S}(\R^d)$, we can identify $\psi$ with a tempered distribution $\tau_\psi\in\mathscr{S}'(\R^d)$ by
\[
\tau_\psi(\varphi)=\int_{\R^d} \varphi(x)\overline{\psi(x)}dx=\langle \varphi,\psi\rangle,\qquad\text{for all}~~\varphi\in\mathscr{S}(\R^d).
\]
\subsection{The Symplectic Fourier Transform}
The standard symplectic form on $\R^{2d}$ is given by
\[
\sigma\left((x,\xi),(y,\eta)\right)=y\xi-x\eta=J(x,\xi)\cdot (y,\eta),
\]
where $J\in SO(2d)$ denotes the symplectic matrix
\begin{equation*}
    J=\begin{pmatrix}0&I_d\\-I_d&0\end{pmatrix},
\end{equation*}
and where $I_d$ is the identity matrix on $\R^d$. For a finite Borel measure $\mu$ on $\R^{2d}$, we define the symplectic Fourier transform at a point $\zeta\in \R^{2d}$ as
\begin{equation*}
    \F_\sigma(\mu)(\zeta)=\int e^{-2\pi i \sigma(\zeta,z)}d\mu(z)=\F(\mu)(J\zeta),
\end{equation*}
where $\mathcal{F}$ denotes the standard Fourier transform.

As the symplectic Fourier transform only differs from the standard Fourier transform by a rotation of the variables, most classical results are still true for the symplectic case. However, from the anti-symmetry of the symplectic form, there is a sign change in Parseval's identity and the Fourier inversion formula. In particular, we have $\F_{\sigma}^{-1}=\F_{\sigma}$, and there is the following version of Parseval's identity for finite measures.
\begin{lemma}[Lemma $3.5$ in \cite{Wolff}]\label{Plancherelfor finite measures}
    Let $\mu$ and $\nu$ be finite Borel measures on $\R^{2d}$. Then
\begin{equation}
        \int \F_{\sigma}(\mu)(z)d\nu(z)=\int \F_{\sigma}(\nu)(-\zeta)d\mu(\zeta).
    \end{equation}
\end{lemma}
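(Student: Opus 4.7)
The plan is to prove this by a straightforward application of Fubini's theorem, with the sign $-\zeta$ on the right-hand side arising from the antisymmetry of $\sigma$. First I would unfold the definition of $\F_\sigma(\mu)$ and substitute into the left-hand side:
\[
\int \F_\sigma(\mu)(z)\,d\nu(z) = \int \int e^{-2\pi i \sigma(z,w)}\,d\mu(w)\,d\nu(z).
\]

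Next I would justify exchanging the order of integration. Since $\mu$ and $\nu$ are finite Borel measures on $\R^{2d}$, the product measure $\mu\otimes\nu$ is a finite Borel measure on $\R^{4d}$, and the integrand $(z,w)\mapsto e^{-2\pi i\sigma(z,w)}$ is continuous with $|e^{-2\pi i\sigma(z,w)}|=1$ everywhere. Hence the integrand is absolutely integrable with respect to $\mu\otimes\nu$, and Fubini's theorem permits the swap
\[
\int\int e^{-2\pi i \sigma(z,w)}\,d\mu(w)\,d\nu(z) = \int\int e^{-2\pi i\sigma(z,w)}\,d\nu(z)\,d\mu(w).
\]

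The final step is to identify the inner integral as $\F_\sigma(\nu)$ evaluated at $-w$. Using the bilinearity and antisymmetry $\sigma(z,w)=-\sigma(w,z)$, one computes
\[
\F_\sigma(\nu)(-w) = \int e^{-2\pi i \sigma(-w,z)}\,d\nu(z) = \int e^{2\pi i \sigma(w,z)}\,d\nu(z) = \int e^{-2\pi i \sigma(z,w)}\,d\nu(z),
\]
so the inner integral equals $\F_\sigma(\nu)(-w)$. Substituting back and renaming $w$ as $\zeta$ gives the claimed identity.

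I do not anticipate any real obstacle: the only place that could cause trouble is the Fubini step, and finiteness of the measures together with the unimodular kernel makes it immediate. The only ``content'' of the lemma beyond classical Parseval for measures is the sign flip, which is dictated entirely by the antisymmetry of $\sigma$ and which is the reason the symplectic Fourier transform is its own inverse.
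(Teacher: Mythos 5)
Your proof is correct. The paper cites this identity directly from Wolff's lecture notes rather than proving it; Wolff's Lemma~3.5 is the analogous statement for the standard Fourier transform (no sign flip, since $\xi\cdot x$ is symmetric), proved exactly by the Fubini argument you give. Your adaptation to the symplectic case is the right one: the single new ingredient is the antisymmetry $\sigma(z,w)=-\sigma(w,z)$, which produces the $-\zeta$, and your Fubini justification (finite measures, unimodular kernel, hence absolute integrability against $|\mu|\otimes|\nu|$) is exactly what is needed.
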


Unlike for the standard Fourier transform, there is a sign change in Parseval's identity for the symplectic Fourier transform. By instead considering a sesquilinear dual-pairing we recover the standard Parseval's identity on $L^2(\Rdd)$.
\begin{corollary}
    If $\Phi,\Psi\in L^2(\R^{2d})$, then
    \begin{equation*}
        \iint_{\R^{2d}} \F_\sigma(\Phi)(x)\overline{\Psi(x)}dx=\iint_{\R^{2d}} \Phi(\xi)\overline{\F_\sigma(\Psi)(\xi)}d\xi.
    \end{equation*}
\end{corollary}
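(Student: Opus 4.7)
The plan is to reduce the identity to Lemma \ref{Plancherelfor finite measures} (Parseval for finite measures) via a density argument. For $\Phi,\Psi \in L^1(\R^{2d}) \cap L^2(\R^{2d})$, associate the finite Borel measures $d\mu = \Phi\,dz$ and $d\nu = \overline{\Psi}\,dz$, which are finite because $\Phi,\Psi \in L^1$. Applying the lemma immediately gives
\[
\int \F_\sigma(\Phi)(x)\overline{\Psi(x)}\,dx = \int \F_\sigma(\nu)(-\zeta)\,\Phi(\zeta)\,d\zeta.
\]

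The next step is to rewrite $\F_\sigma(\nu)(-\zeta)$ as $\overline{\F_\sigma(\Psi)(\zeta)}$. Pulling the complex conjugation through the exponential and invoking the bilinearity and antisymmetry of $\sigma$, a short computation gives
\[
\F_\sigma(\nu)(\zeta) = \int e^{-2\pi i \sigma(\zeta,z)}\overline{\Psi(z)}\,dz = \overline{\F_\sigma(\Psi)(-\zeta)},
\]
so that $\F_\sigma(\nu)(-\zeta)=\overline{\F_\sigma(\Psi)(\zeta)}$. Substituting this into the previous display produces the claimed identity for all $\Phi,\Psi \in L^1\cap L^2$.

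To pass to arbitrary $L^2(\R^{2d})$-functions, I would use that $\F_\sigma = \F \circ J$ is unitary on $L^2(\R^{2d})$, since $J \in SO(2d)$ and the standard Fourier transform is unitary. Hence both sides of the asserted identity are continuous sesquilinear forms on $L^2(\R^{2d})\times L^2(\R^{2d})$, and since they agree on the dense subspace $L^1\cap L^2$, they agree on all of $L^2(\R^{2d})$.

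The only delicate point is bookkeeping: one must track the sign of $\sigma$ carefully, so that the $-\zeta$ appearing in Lemma \ref{Plancherelfor finite measures} (a reflection of the antisymmetry of the symplectic form) is exactly absorbed by the complex conjugation built into the sesquilinear pairing. Once this cancellation is carried out, the argument is routine and no additional estimates are needed.
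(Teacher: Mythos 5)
Your proof is correct and follows the route the paper intends: apply Lemma \ref{Plancherelfor finite measures} to the complex measures $d\mu=\Phi\,dz$ and $d\nu=\overline{\Psi}\,dz$, observe that the reflection $\zeta\mapsto-\zeta$ appearing in that lemma is exactly absorbed by the complex conjugation in the sesquilinear pairing (since $\F_\sigma(\overline{\Psi})(-\zeta)=\overline{\F_\sigma(\Psi)(\zeta)}$), and then extend from $L^1\cap L^2$ to $L^2$ by density and Plancherel for $\F_\sigma=\F\circ J$. The only small point worth flagging is that Lemma \ref{Plancherelfor finite measures} is stated for finite Borel measures, whereas $\Phi\,dz$ and $\overline{\Psi}\,dz$ are complex measures; this is a routine extension by linearity (decomposing into real/imaginary and positive/negative parts) and does not affect the argument. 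One could alternatively get the identity in one line from the facts that $\F_\sigma$ is unitary on $L^2(\R^{2d})$ and $\F_\sigma^{-1}=\F_\sigma$, hence $\F_\sigma^*=\F_\sigma$, but your reduction to the measure-theoretic lemma is equally valid and matches the paper's presentation.
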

This indicates that a sesquilinear dual-paring might be more natural whenever we are working with symplectic structures. We have therefore chosen to only consider sesquilinear dual-parings throughout the rest of the paper.

Since the Lebesgue measure is invariant under actions of $\mathrm{SO}(2d)$, it is possible to extend all classical Fourier restriction estimates to the symplectic case. This is the content of the next lemma, and is a consequence of the fact that the symplectic Fourier transform is the standard Fourier transform up to a rotation.
\begin{lemma}\label{ExtensionOfResToSymp} Let $\mu$ be a Borel measure on $\R^{2d}$. If there exists $1\leq p,q\leq \infty$ and a constant $C>0$ such that
\[\|\F(\Psi)\|_{L^q(\mu)}\leq C\|\Psi\|_{L^p(\R^{2d})},\]
for all $\Psi\in \mathscr{S}(\R^{2d})$,
then the same estimate holds for the symplectic Fourier transform, meaning
\[\|\F_\sigma(\Psi)\|_{L^q(\mu)}\leq C\|\Psi\|_{L^p(\R^{2d})},\]
for all $\Psi\in\mathscr{S}(\R^{2d})$, and the same $p,q$ and $C$.
\end{lemma}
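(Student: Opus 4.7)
The plan is to exploit the identity $\F_\sigma(\Psi)(\zeta)=\F(\Psi)(J\zeta)$ already recorded in the paper, combined with the fact that $J\in SO(2d)$ is an orthogonal isometry of $\R^{2d}$. The key trick is to absorb the rotation by $J$ into the input function rather than into the measure $\mu$, so that the hypothesis (which fixes the measure $\mu$) applies directly.

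Concretely, given $\Psi\in\mathscr{S}(\R^{2d})$, I would set $\Phi(x):=\Psi(Jx)$, which again lies in $\mathscr{S}(\R^{2d})$. A standard change of variable $y=Jx$ (with $|\det J|=1$) yields
\[
\F(\Phi)(\zeta)=\int_{\R^{2d}}\Psi(Jx)\,e^{-2\pi i\zeta\cdot x}\,dx=\int_{\R^{2d}}\Psi(y)\,e^{-2\pi i (J^{-1})^T\zeta\cdot y}\,dy=\F(\Psi)((J^{-1})^T\zeta).
\]
Since $J^T=-J$ and $J^{-1}=-J$, one has $(J^{-1})^T=J$, hence $\F(\Phi)(\zeta)=\F(\Psi)(J\zeta)=\F_\sigma(\Psi)(\zeta)$. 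The same change of variable, together with $J\in SO(2d)$, gives $\|\Phi\|_{L^p(\R^{2d})}=\|\Psi\|_{L^p(\R^{2d})}$. Applying the hypothesized restriction estimate to $\Phi$ then yields
\[
\|\F_\sigma(\Psi)\|_{L^q(\mu)}=\|\F(\Phi)\|_{L^q(\mu)}\leq C\|\Phi\|_{L^p(\R^{2d})}=C\|\Psi\|_{L^p(\R^{2d})},
\]
which is the desired estimate with the same constant $C$. There is essentially no obstacle here beyond bookkeeping with the signs in $J^{-1}$ and $J^T$; the only conceptual point is that the rotation is pushed onto the Schwartz function (where it acts as an $L^p$-isometry) rather than onto $\mu$ (where it would alter the hypothesis).
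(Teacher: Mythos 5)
Your proof is correct and is exactly the argument the paper leaves implicit: the authors state the lemma as an immediate consequence of $\F_\sigma(\Psi)(\zeta)=\F(\Psi)(J\zeta)$ with $J\in SO(2d)$, and your substitution $\Phi=\Psi\circ J$ together with the observations $(J^{-1})^T=J$, $|\det J|=1$ (so $\|\Phi\|_{L^p}=\|\Psi\|_{L^p}$) fills in the routine details. In particular you correctly identify the one point that matters, namely that the rotation must be absorbed into the function rather than the measure so the hypothesis applies unchanged.
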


\subsection{The Heisenberg group, time-frequency analysis and quantization}

Let us briefly review some basic facts on the representation theory of the Heisenberg group $\mathbb{H}$, see for example  \cite{Folland,Thangavelu_book}, where $\mathbb{H}$ is the group $\Rd\times\Rd\times\R$ with multiplication
\[ (x,\xi,s)\bullet(x^\prime,\xi^\prime,s^\prime)=\left(x+x^\prime,\xi+\xi^\prime,s+s^\prime+\frac{x^\prime\cdot\xi-x\cdot\xi^\prime}{2}\right).\]
We denote by $\rho_\lambda$ the Schr\"odinger representation of the Heisenberg group $\mathbb{H}$ on $\LtRd$, defined by 
\[\rho_\lambda(z,s)g(t)=e^{2\pi i\lambda s }e^{\lambda(-\pi ix\cdot\xi+2\pi it\cdot\xi)}g(t-x)\qquad\text{for}~~z=(x,\xi)\in\Rdd.\] 
Note that the restriction of $\rho_\lambda$ to $\Rdd$ defines a projective unitary representation of $\Rdd$ on $\LtRd$, denoted by $\rho_\lambda(z):=\rho_\lambda(z,0)$. The group Fourier transform on $\mathbb{H}$ is given by
\begin{equation*}
    \widehat{F}(\lambda)g(t)=\iint_{\mathbb{H}}F(z,s)\rho_\lambda(z,s)g(t)\,dz\,ds,\qquad\text{for}~~F\in L^1(\mathbb{H}),\,g\in\LtRd.
\end{equation*}
For $F\in L^1(\mathbb{\mathbb{H}})\cap L^2(\mathbb{\mathbb{H}})$ the mapping $\widehat{F}(\lambda)\mapsto F(z,s)$ computes the Fourier coefficient of $F$, which for a fixed $s$ coincides with the Fourier-Wigner transform of $F$, see Section \ref{sec:QHA}.

We include a brief review of notation and results from time-frequency analysis. The theory presented can be found in \cite{Grochenig}. From now on we fix $\lambda=1$.

For $z=(x,\xi)\in\R^{2d}$, we denote the Schr\"{o}dinger representation by 
\begin{equation}\label{SchrodingerRepresentation}
    \rho(z)=\rho(x,\xi)=e^{-\pi i x\cdot \xi}M_{\xi}T_x,
\end{equation}
where $M_\xi$ and $T_x$ are the modulation and translation operators on $L^2(\mathbb{R}^d)$, respectively. 

The Schr\"{o}dinger representation gives rise to a time-frequency representation. For $f,g\in L^2(\R^d)$, the \textit{cross-ambiguity function} of $f$ and $g$ is defined as
\begin{equation}\label{AmbiguityFunc}
    A(f,g)(x,\xi)=\langle f,\rho(x,\xi) g\rangle=\int_{\R^d}f\left(t+\frac{x}{2}\right)\overline{g\left(t-\frac{x}{2}\right)}e^{-2\pi i \xi\cdot t}dt.
\end{equation}
For fixed $g\in\mathscr{S}(\R^d)$, the cross-ambiguity function extends to a bounded linear functional on $\mathscr{S}'(\R^{d})$. The next result is known as the covariance property of the cross-ambiguity function.
\begin{lemma}[Lemma $3.1.3$ in \cite{Grochenig}]\label{Covariance property}
    Let $f,g\in L^2(\R^d)$. Then for any $z,\zeta\in \R^{2d}$
    \begin{equation*}
        A(\rho(\zeta)f,g)(z)=e^{\pi i \sigma(\zeta,z)}A(f,g)(z-\zeta).
    \end{equation*}
\end{lemma}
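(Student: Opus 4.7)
The plan is to reduce the identity to the composition rule for the Schrödinger representation and then exploit the antilinearity of the inner product in its second slot. Starting from the definition \eqref{AmbiguityFunc},
\begin{equation*}
A(\rho(\zeta)f,g)(z)=\langle \rho(\zeta)f,\rho(z)g\rangle=\langle f,\rho(\zeta)^{*}\rho(z)g\rangle,
\end{equation*}
so the entire problem comes down to simplifying the operator $\rho(\zeta)^{*}\rho(z)$.

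Next I would record two short computations that follow directly from \eqref{SchrodingerRepresentation}. First, $\rho(\zeta)$ is unitary with $\rho(\zeta)^{*}=\rho(-\zeta)$, which is verified by checking that $\rho(\zeta)\rho(-\zeta)$ acts as the identity pointwise on $L^2(\R^d)$. Second, the projective composition law
\begin{equation*}
\rho(z_1)\rho(z_2)=e^{\pi i\sigma(z_1,z_2)}\rho(z_1+z_2)
\end{equation*}
holds: writing $z_j=(x_j,\xi_j)$ and applying the left-hand side to $g$, the translation parts combine to $g(t-x_1-x_2)$ while the chirp and modulation phases collect into $e^{-\pi i(x_1\cdot\xi_1+x_2\cdot\xi_2)-2\pi i x_1\cdot\xi_2}e^{2\pi i t\cdot(\xi_1+\xi_2)}$; comparing with $\rho(z_1+z_2)g(t)$ leaves a residual scalar $e^{\pi i(x_2\cdot\xi_1-x_1\cdot\xi_2)}=e^{\pi i\sigma(z_1,z_2)}$.

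Specialising the composition law to $z_1=-\zeta$ and $z_2=z$ and invoking the antisymmetry of $\sigma$ yields $\rho(-\zeta)\rho(z)=e^{-\pi i\sigma(\zeta,z)}\rho(z-\zeta)$. Substituting back into the inner product and pulling the scalar out of the conjugate-linear second argument,
\begin{equation*}
A(\rho(\zeta)f,g)(z)=\overline{e^{-\pi i\sigma(\zeta,z)}}\,\langle f,\rho(z-\zeta)g\rangle=e^{\pi i\sigma(\zeta,z)}\,A(f,g)(z-\zeta),
\end{equation*}
which is the desired identity. The main source of error here is phase bookkeeping: obtaining the correct sign on $\sigma(\zeta,z)$ requires both the antisymmetry of $\sigma$ in the composition step and an accurate accounting of the conjugation produced by the sesquilinear convention when the scalar is moved out of the second slot. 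Beyond these careful sign checks, the argument is a routine computation.
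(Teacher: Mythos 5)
Your proof is correct. You reduce the claim to the projective composition law $\rho(z_1)\rho(z_2)=e^{\pi i\sigma(z_1,z_2)}\rho(z_1+z_2)$, which is the standard route (and the one underlying the cited Lemma 3.1.3 in Gröchenig); the phase bookkeeping — the residual scalar $e^{\pi i(x_2\cdot\xi_1-x_1\cdot\xi_2)}$ matching $\sigma(z_1,z_2)=x_2\cdot\xi_1-x_1\cdot\xi_2$, the sign flip from $\sigma(-\zeta,z)=-\sigma(\zeta,z)$, and the conjugation when pulling the scalar out of the antilinear slot — is all handled accurately.
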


An important case is when both $f$ and $g$ are the $L^2$ normalized Gaussian,
\begin{equation}\label{Gaussian}
    g_0(t)=2^{\frac{d}{4}}e^{-\pi|t|^2}.
\end{equation}
Then the cross-ambiguity function is again a Gaussian,
\begin{equation}\label{STFTgaussian}
A(g_0,g_0)(z)=e^{-\pi\frac{|z|^2}{2}}.
\end{equation}
This is a special case of Lemma $1.5.2$ in \cite{Grochenig}. 

A key property of the cross-ambiguity function is Moyal's identity,
\[
\langle A(f,\varphi),A(g,\psi )\rangle=\langle f,g\rangle\langle \psi,\varphi\rangle,
\]
for all $f,g,\varphi,\psi\in L^2(\R^d)$. This result in an inversion formula, see for example \cite[Cor. 3.2.3]{Grochenig}.
\begin{lemma}[Inversion formula]\label{inversion formula}
Let $g,h\in L^2(\R^d)$ with $\langle g,h\rangle\neq 0$. Then for all $f\in L^2(\R^d)$
\begin{equation*}
    f=\frac{1}{\langle h,g\rangle}\iint_{\R^{2d}}A(f,g)(z) \rho(z)h \,dz,
\end{equation*}
where the integral is defined weakly.
\end{lemma}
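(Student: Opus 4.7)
The plan is to unwind the weak-integral formulation and reduce the identity to a direct application of Moyal's identity, which the excerpt records immediately above the statement. Concretely, I would fix an arbitrary test vector $\varphi \in L^2(\R^d)$ and show
\[
\Big\langle \tfrac{1}{\langle h,g\rangle}\iint_{\R^{2d}} A(f,g)(z)\,\rho(z)h\,dz,\ \varphi\Big\rangle = \langle f,\varphi\rangle,
\]
interpreting the left-hand side (as the lemma prescribes) by pulling the inner product inside the integral. This is the meaning of ``weakly defined'' here, so there is no convergence issue to resolve beyond what Moyal's identity already guarantees.

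Pulling the inner product inside, the left-hand side becomes
\[
\frac{1}{\langle h,g\rangle}\iint_{\R^{2d}} A(f,g)(z)\,\langle \rho(z)h,\varphi\rangle\,dz.
\]
The key algebraic step is to recognize the conjugate ambiguity function inside the integrand: by definition \eqref{AmbiguityFunc},
\[
\langle \rho(z)h,\varphi\rangle = \overline{\langle \varphi,\rho(z)h\rangle} = \overline{A(\varphi,h)(z)}.
\]
Substituting this rewrites the integral as the $L^2(\R^{2d})$ inner product $\langle A(f,g),A(\varphi,h)\rangle$.

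Now Moyal's identity, stated in the excerpt, gives
\[
\langle A(f,g),A(\varphi,h)\rangle = \langle f,\varphi\rangle\,\langle h,g\rangle,
\]
and dividing through by $\langle h,g\rangle\neq 0$ yields exactly $\langle f,\varphi\rangle$. Since $\varphi\in L^2(\R^d)$ was arbitrary, the weak identity for $f$ follows. I do not expect any real obstacle: the only subtlety is justifying the interchange of the integral with the inner product, but this is automatic once ``weakly'' is read as the pairing identity above, and the finiteness needed is furnished by Moyal's identity, which ensures $A(f,g)\,\overline{A(\varphi,h)}\in L^1(\R^{2d})$. If one wished to interpret the integral in the Bochner/Pettis sense, the same computation shows it is weakly well-defined and coincides with $\langle h,g\rangle f$.
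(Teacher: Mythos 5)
Your proof is correct and is exactly the standard derivation that the paper implicitly invokes (the lemma is cited to Gr\"{o}chenig with the remark that it follows from Moyal's identity, and no proof is given in the paper itself). The only minor gloss: Moyal's identity gives $A(f,g),A(\varphi,h)\in L^{2}(\R^{2d})$, and it is Cauchy--Schwarz that then yields the $L^{1}$-integrability of the product you need to justify the weak pairing; you might make that one-line step explicit.
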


A localization operator with symbol $a\in \mathscr{S'}(\R^{2d})$ and windows $\varphi,\psi\in \mathscr{S}(\R^d)$ is the operator $\mathcal{A}_a^{\varphi,\psi}:\mathscr{S}(\R^d)\to\mathscr{S}'(\R^d)$ such that
\begin{equation*}
    \langle \mathcal{A}_a^{\varphi,\psi}f,g\rangle=\langle a A(f,\varphi),A(g,\psi)\rangle,
\end{equation*}
holds for all $f,g\in \mathscr{S}(\R^d)$. When the symbol $a$ is a function on $\R^{2d}$, the localization operator can be written as
\begin{equation}\label{LocalizationOperator}    \mathcal{A}_a^{\varphi,\psi}f=\iint_{\R^{2d}}a(z)A(f,\varphi)(z)\rho(z)\psi\,dz,
\end{equation}
when acting on $f\in L^2(\R^d)$. The integral has to be understood in the weak sense. If $a\equiv 1$, and $\varphi=\psi$ are chosen such that $\|\varphi\|_{L^2}=1$, then the localization operator becomes the identity operator on $L^2(\R^d)$ by Lemma \ref{inversion formula}. The mapping $a\mapsto \mathcal{A}_a^{\varphi,\psi}$ is often referred to as Berezin quantization and in the case $\varphi=\psi=g_0$ as the anti-Wick quantization.

Associated to the cross-ambiguity function is another important time-frequency representation, known as the cross-Wigner distribution. Given $f,g\in\mathscr{S}(\R^d)$ and $(x,\xi)\in\R^{2d}$, the cross-Wigner distribution is defined as
\begin{equation*}
    W(f,g)(x,\xi)=\int_{\R^d}f\left(x+\frac{t}{2}\right)\overline{g\left(x-\frac{t}{2}\right)}e^{-2\pi i \xi\cdot t}dt.
\end{equation*}
It was first introduced by Wigner in 1932 in an attempt to achieve a phase space representation of quantum mechanics \cite{Wigner}. The cross-Wigner distribution is related to the cross-ambiguity function through the symplectic Fourier transform.
\begin{lemma}[Lemma $4.3.4$ in \cite{Grochenig}]\label{FourierOfWigner}
For any $f,g\in\mathscr{S}(\R^d)$ and all $(x,\xi)\in\R^{2d}$,
\begin{equation*}
    W(f,g)(z)=\F_\sigma(A(f,g))(z)=\F_\sigma(\langle f,\rho(z)g\rangle).
\end{equation*}
\end{lemma}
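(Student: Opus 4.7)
The plan is a direct computation, since the second equality $A(f,g)(z) = \langle f, \rho(z)g\rangle$ is simply the defining formula of the cross-ambiguity function recorded in \eqref{AmbiguityFunc}. Thus only the first equality $W(f,g)(z) = \F_\sigma(A(f,g))(z)$ requires work.

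To prove it, I would expand $\F_\sigma(A(f,g))(x,\xi)$ using the definition of the symplectic Fourier transform and the definition of $A(f,g)$, obtaining the iterated integral
\[
\F_\sigma(A(f,g))(x,\xi) = \iint_{\R^{2d}} \left( \int_{\R^d} f\!\left(t+\tfrac{y}{2}\right) \overline{g\!\left(t-\tfrac{y}{2}\right)} e^{-2\pi i \eta \cdot t}\, dt \right) e^{-2\pi i (y\cdot \xi - x\cdot \eta)}\, dy\, d\eta.
\]
Since $f,g \in \mathscr{S}(\R^d)$, the integrand decays rapidly enough in $t$ and $y$ to justify interchanging the $y$- and $t$-integrals with the $\eta$-integral. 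Performing the $\eta$-integration first (rigorously via the Fourier inversion formula applied to a Schwartz function in $\eta$, rather than a formal delta function manipulation) produces a point evaluation at $t=x$, so after integrating out $t$ one is left with
\[
\int_{\R^d} f\!\left(x+\tfrac{y}{2}\right) \overline{g\!\left(x-\tfrac{y}{2}\right)} e^{-2\pi i y\cdot \xi}\, dy,
\]
which is precisely $W(f,g)(x,\xi)$ by definition.

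To make the Fourier inversion step fully rigorous rather than formal, I would first verify the identity in the sesquilinear sense by pairing both sides against a test function $\Psi \in \mathscr{S}(\R^{2d})$ and applying Parseval for the symplectic Fourier transform together with Fubini; alternatively, one can substitute $s = t - x$ in the $t$-integral and recognize the $(t,\eta) \mapsto (x,\xi)$ passage as an ordinary inverse Fourier transform in the Schwartz class. The one mild obstacle is this bookkeeping of the change of variables together with the sign convention in $\sigma(\zeta, z) = y\cdot\xi - x\cdot\eta$; once the $\eta$ exponent is collected as $e^{-2\pi i \eta\cdot(t-x)}$, Fourier inversion in $\eta$ closes the argument. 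Since $\F_\sigma$ squares to the identity, the dual statement $A(f,g) = \F_\sigma(W(f,g))$ follows at once.
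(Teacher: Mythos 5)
Your proof is correct, and since the paper simply cites this as Lemma 4.3.4 of Gr\"ochenig without giving an argument, the direct computation you describe is exactly the standard textbook derivation. The essential points are all in place: the second equality is just the defining formula \eqref{AmbiguityFunc}, and for the first equality, collecting the $\eta$-phases into $e^{-2\pi i \eta\cdot(t-x)}$ and invoking Fourier inversion in $\eta$ (for each fixed $y$, the map $\eta\mapsto A(f,g)(y,\eta)$ is Schwartz, so the $\eta$-integral is absolutely convergent even though a naive three-variable Fubini is not) produces the evaluation at $t=x$ and hence $W(f,g)(x,\xi)$. Your first sentence about ``interchanging the $y$- and $t$-integrals with the $\eta$-integral'' is slightly imprecise on its own, since there is no decay in $\eta$ to justify a blanket Fubini, but you immediately supply the correct fix, so the argument as a whole is sound.
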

An important example is the Wigner distribution of the $L^2$ normalized Gaussian $g_0$. Utilizing equation \eqref{STFTgaussian} it follows that
\begin{equation}\label{WignerGaussian}
    W(g_0,g_0)(z)=\mathcal{F}_\sigma(A(g_0,g_0))(z)=\mathcal{F}_\sigma\left(e^{-\pi\frac{|\cdot|^2}{2}}\right)(z)=2^{d}e^{-2\pi|z|^2}.
\end{equation}

Just as the cross-ambiguity function gave rise to the localization operator, the cross-Wigner distribution gives rise to another quantization scheme. For a tempered distribution $a\in\mathscr{S}'(\R^{2d})$ the Weyl quantization is defined as the operator $L_a:\mathscr{S}(\R^d)\to\mathscr{S}'(\R^{2d})$ given by  
\[L_{a}=\iint_{\R^{2d}}\mathcal{F}_\sigma(a)(z)\rho(z)dz,\]
where $\rho$ is the Schr\"{o}dinger representation as defined in \eqref{SchrodingerRepresentation}.
We refer to $a$ as the Weyl symbol of $L_a$. A well-known result by Pool states that the Weyl quantization extends to an isometric isomorphism between $L^2(\R^{2d})$ and the Hilbert-Schmidt operators on $L^2(\R^d)$, see \cite{Pool}. 

A different representation of the Weyl-quantization is the following \cite[Prop. 2.5]{Folland}:
\begin{prop}[Weak formulation]\label{FollandProp}
Let $a\in \mathscr{S}'(\R^{2d})$, and $f,g\in\mathscr{S}(\R^d)$. Then
\begin{equation*}
    \langle L_a f,g\rangle=\langle a, W(g,f)\rangle,
\end{equation*}
where we use a sesquilinear dual pairing between $\mathscr{S}(\Rd)$ and $\mathscr{S}^\prime(\Rd)$.
\end{prop}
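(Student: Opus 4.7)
The plan is to unwind the defining integral of $L_a$, push the symplectic Fourier transform off the symbol $a$ onto the time-frequency object appearing on the test-function side, and then identify what remains as the cross-Wigner distribution via Lemma \ref{FourierOfWigner}. All three ingredients are already in the excerpt: the sesquilinear Plancherel identity for $\F_\sigma$ (the corollary following Lemma \ref{Plancherelfor finite measures}), the relation $W = \F_\sigma \circ A$, and the sesquilinear pairing conventions between $\mathscr{S}(\R^d)$ and $\mathscr{S}'(\R^d)$.

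Concretely, I would first interpret the defining integral
\[
L_a = \iint_{\R^{2d}}\F_\sigma(a)(z)\,\rho(z)\,dz
\]
in the weak sense, so that for $f,g\in\mathscr{S}(\R^d)$,
\[
\langle L_a f, g\rangle = \bigl\langle \F_\sigma(a),\; z\mapsto \langle \rho(z)f,g\rangle\bigr\rangle,
\]
where the outer pairing is the sesquilinear one between $\mathscr{S}'(\R^{2d})$ and $\mathscr{S}(\R^{2d})$. By the sesquilinear convention together with the definition \eqref{AmbiguityFunc}, $\langle \rho(z)f,g\rangle = \overline{\langle g,\rho(z)f\rangle} = \overline{A(g,f)(z)}$, so this equals $\langle \F_\sigma(a), A(g,f)\rangle$. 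Moving $\F_\sigma$ across with the sesquilinear Plancherel identity extended by duality to the $\mathscr{S}'\times\mathscr{S}$ pairing yields $\langle a,\F_\sigma(A(g,f))\rangle$, and Lemma \ref{FourierOfWigner} identifies $\F_\sigma(A(g,f))$ with $W(g,f)$. Chaining the three equalities gives $\langle L_a f,g\rangle = \langle a,W(g,f)\rangle$, which is the claim.

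The main technical obstacle is to justify these manipulations when $a$ is only a tempered distribution: one must know that $A(g,f)\in\mathscr{S}(\R^{2d})$ whenever $f,g\in\mathscr{S}(\R^d)$, so every pairing above is well defined, and that $\F_\sigma$ extends continuously to $\mathscr{S}'(\R^{2d})$ in a way compatible with the sesquilinear Plancherel identity. The first point is standard because $A(g,f)$ is, up to a change of variables, a partial Fourier transform of the Schwartz function $(t,x)\mapsto g(t+x/2)\overline{f(t-x/2)}$. The second is immediate once one defines $\langle \F_\sigma(a),\phi\rangle := \langle a,\F_\sigma(\phi)\rangle$ by duality, using that $\F_\sigma$ is a topological isomorphism of $\mathscr{S}(\R^{2d})$ with $\F_\sigma^{-1}=\F_\sigma$. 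With these standard facts recorded, the proof reduces to the three-line computation above.
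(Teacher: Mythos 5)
The paper does not prove this proposition; it is cited directly from \cite[Prop.\ 2.5]{Folland}, so there is no in-paper argument to compare against. Your proof is correct and self-contained within the framework of the paper: unwinding the weak definition of $L_a=\iint\F_\sigma(a)(z)\rho(z)\,dz$, moving $\F_\sigma$ off the symbol by the (sesquilinear) Plancherel identity extended to $\mathscr{S}'\times\mathscr{S}$, and invoking Lemma~\ref{FourierOfWigner} to identify $\F_\sigma(A(g,f))=W(g,f)$ is exactly the standard argument, and you correctly flag the two technical points that need to be checked ($A(g,f)\in\mathscr{S}(\R^{2d})$ and the duality extension of $\F_\sigma$). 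One small notational wobble: in the first displayed equality you put the function $z\mapsto\langle\rho(z)f,g\rangle=\overline{A(g,f)(z)}$ in the second slot of a pairing you declare to be sesquilinear, but under the paper's convention ($\langle u,\phi\rangle=\int u\,\overline{\phi}$ for functions) the weak form $\iint \F_\sigma(a)(z)\langle\rho(z)f,g\rangle\,dz$ is literally $\langle\F_\sigma(a),A(g,f)\rangle$ with $A(g,f)$ itself (not its conjugate) in the test slot. Your very next sentence lands on $\langle\F_\sigma(a),A(g,f)\rangle$, which is the right expression, so the chain of equalities is sound; it would just read more cleanly if that intermediate display were written directly as $\langle\F_\sigma(a),A(g,f)\rangle$.
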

Let us close this section with a remark related to the Weyl symbol of a  localization operator:
\begin{equation}\label{eq:LocOpWeyl}
\mathcal{A}^{\varphi,\psi}_a=L_{a*W(\psi,\varphi)},    
\end{equation}
i.e. the Weyl symbol of the localization operator is $a*W(\psi,\varphi)$, see  \cite[Thm. 5.2]{Luef_mixed}.
\subsection{Schatten Class Operators}

We denote the bounded operators on $L^2(\R^d)$ by $\mathcal{B}(L^2(\R^d))$ and the compact operators by $\mathcal{K}$. Recall that any compact operator $T$ on $L^2(\R^d)$ has a singular value decomposition (see for instance \cite[Chap.3]{SimonOp}):
\[
T=\sum_{n\in\mathbb{N}}s_n(T)\varphi_n\otimes \psi_n,
\]
where $\{s_n(T)\}_{n\in\mathbb{N}}$ denotes the sequence of the singular values of $T$ and $\{\varphi_n\}_{n\in\mathbb{N}},\{\psi_n\}_{n\in\mathbb{N}}$ are two orthonormal bases for $L^2(\R^d)$. For a general compact operator $T$ it is well-known that
\[
\lim_{n\to\infty} s_n(T)=0.
\]
By requiring summability conditions on the singular values of the operators $T\in\mathcal{K}$ we can define the Schatten $p$-classes. For any $1\leq p<\infty$ we define
\[
\cS^p:=\left\{T\in \mathcal{K}: \{s_n(T)\}_{n\in\mathbb{N}}\in \ell^p\right\},
\]
and for $p=\infty$ we use the identification of $\cS^\infty$ with the space of bounded operators $\mathcal{B}(L^2(\R^d))$. There is the inclusion $\cS^1\subseteq \cS^p\subseteq \cS^q$ when $1\leq p\leq q\leq \infty$, which comes from the inclusion relations for $\ell^p$ spaces.

For $T\in \cS^1$ one can define a trace
\[
\tr(T)=\sum_{n\in\mathbb{N}}\langle Te_n,e_n\rangle,
\]
for some orthonormal basis $\{e_n\}_{n\in\mathbb{N}}$ on $L^2(\R^d)$. In fact, the trace is independent of the choice of the orthonormal basis. Through the trace one can define the norm
\[
\|T\|_{\cS^p}=\tr(|T|^p)^\frac{1}{p}=\tr\left((T^*T)^{\frac{p}{2}}\right)^\frac{1}{p}=\left(\sum_{n\in\mathbb{N}}s_n(T)^p\right)^\frac{1}{p},
\] 
which turns $\cS^p$ into a Banach space. It is worth noting that finite rank operators are dense in $\cS^p$ for $1\leq p<\infty$, as the space $c_{00}$ of sequences with only finitely many non-zero elements are dense in $\ell^p$ for the same range of $p$. This implies that $\cS^1$ is also dense in $\cS^{p}$ for $1< p<\infty$. 

There is an analog of H\"{o}lder's inequality for Schatten $p$-classes.
\begin{theorem}[Theorem $3.7.6$ in \cite{SimonOp}]
    Let $1<p,q,r<\infty$ be such that $\frac{1}{p}+\frac{1}{q}=\frac{1}{r}$.
    If $S\in \cS^p$ and $T\in \cS^q$, then $ST\in \cS^r$ and
    \begin{equation}
        \|ST\|_{\cS^r}\leq\|S\|_{\cS^p}\|T\|_{\cS^q}.
    \end{equation}
\end{theorem}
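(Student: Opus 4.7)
The plan is to deduce the Schatten-class Hölder inequality $\|ST\|_{\cS^r} \leq \|S\|_{\cS^p}\|T\|_{\cS^q}$ from three ingredients: a multiplicative singular-value estimate, a majorization principle, and the classical scalar Hölder inequality for sequences.

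\textbf{Step 1: Horn's multiplicative singular-value inequality.} I would first establish that for any compact operators $S, T$ on $\LtRd$ and every $n \geq 1$,
$\prod_{j=1}^n s_j(ST) \leq \prod_{j=1}^n s_j(S)\, s_j(T)$.
The standard route uses the Courant--Fischer--Weyl min-max characterization $s_j(A) = \min_{\dim M = j-1} \max_{x \perp M,\, \|x\|=1} \|Ax\|$ together with the determinantal identity $\prod_{j=1}^n s_j(A) = \sup \bigl\{ |\det(\langle A\varphi_i, \psi_j\rangle)_{i,j=1}^n| : (\varphi_i),(\psi_j) \text{ orthonormal} \bigr\}$. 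Writing $\langle ST\varphi_i, \psi_j\rangle = \langle T\varphi_i, S^*\psi_j\rangle$ as a matrix product and applying the Cauchy--Binet formula bounds the extremal determinant for $ST$ by the product of the corresponding extremal determinants of $T$ and $S^*$, which in turn are controlled by the top singular values.

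\textbf{Step 2: From log-majorization to additive majorization.} Horn's inequality asserts the log-majorization of the decreasing sequence $(s_j(ST))$ by $(s_j(S)\,s_j(T))$. Applying the Hardy--Littlewood--Pólya principle with the test function $\phi(t) = t^r$ (for which $\phi \circ \exp$ is convex and nondecreasing since $r > 0$) upgrades the multiplicative majorization to the additive estimate
$\sum_{j} s_j(ST)^r \leq \sum_{j} (s_j(S)\, s_j(T))^r$.

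\textbf{Step 3: Scalar Hölder on sequences.} Since $1/p + 1/q = 1/r$, the exponents $p/r$ and $q/r$ are Hölder conjugate, so the discrete Hölder inequality applied to the sequences $(s_j(S)^r)$ and $(s_j(T)^r)$ gives
$\sum_j (s_j(S)\,s_j(T))^r \leq \Bigl( \sum_j s_j(S)^p \Bigr)^{r/p} \Bigl( \sum_j s_j(T)^q \Bigr)^{r/q} = \|S\|_{\cS^p}^r \, \|T\|_{\cS^q}^r$.
Taking $r$-th roots delivers the claimed estimate, and in particular shows that the $r$-th powers of the singular values of $ST$ are summable, so $ST \in \cS^r$.

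The main obstacle is Step 1; Horn's inequality is the genuinely nontrivial ingredient and requires careful linear-algebraic bookkeeping around the determinantal extremal characterization of singular values. A slicker alternative that bypasses Horn entirely is bilinear complex interpolation: the multiplication map $(S,T) \mapsto ST$ is trivially bounded at the corner cases $\cS^1 \times \cS^\infty \to \cS^1$ and $\cS^\infty \times \cS^1 \to \cS^1$ with unit operator norm, and since the Schatten classes form a complex interpolation scale with $[\cS^{p_0}, \cS^{p_1}]_\theta = \cS^p$ whenever $1/p = (1-\theta)/p_0 + \theta/p_1$, Calderón's bilinear interpolation theorem recovers the general statement; this trades the singular-value combinatorics for the abstract interpolation machinery.
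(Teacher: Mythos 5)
The paper does not prove this statement itself; it is imported verbatim as Theorem 3.7.6 of Simon's \emph{Operator Theory}, so there is no in-paper proof to compare against. Your primary argument is exactly the standard route used in that reference: Horn's multiplicative singular-value inequality $\prod_{j\le n}s_j(ST)\le\prod_{j\le n}s_j(S)\,s_j(T)$, then Weyl's majorant lemma with $\phi(t)=t^r$ (for which $\phi(e^x)=e^{rx}$ is increasing and convex, so weak log-majorization of the finite partial products yields $\sum_{j\le N}s_j(ST)^r\le\sum_{j\le N}\bigl(s_j(S)s_j(T)\bigr)^r$ for every $N$, and one lets $N\to\infty$), and finally discrete H\"older with the conjugate pair $p/r$, $q/r$. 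All three steps are correct and together give the claim.

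One correction to your interpolation aside: the two corners $\cS^1\times\cS^\infty\to\cS^1$ and $\cS^\infty\times\cS^1\to\cS^1$ interpolate only along the dual-pairing line $1/p+1/q=1$, i.e.\ to $\cS^p\times\cS^{p'}\to\cS^1$, not to the full region $1/p+1/q=1/r$ with $r$ varying. That region is a two-dimensional triangle of exponent triples $(1/p,1/q,1/r)$ with the third vertex $(0,0,0)$ corresponding to $\cS^\infty\times\cS^\infty\to\cS^\infty$; you need that endpoint and a second interpolation step (e.g.\ first obtain $\cS^p\times\cS^{p'}\to\cS^1$ and then, for fixed $p$, interpolate between $\cS^p\times\cS^\infty\to\cS^p$ and $\cS^p\times\cS^{p'}\to\cS^1$) to reach general $r>1$. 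This does not affect the correctness of your main proof, but the interpolation aside as written would not establish the theorem on its own.
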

As in the case of $\ell^p$-sequence spaces, this allows one to define a dual pairing between $\mathcal{S}^p$ and $\mathcal{S}^{p'}$ for the conjugate exponent $p'=p/(p-1)$ with the same modifications at the endpoints $p=1$ and $p=\infty$:

\begin{theorem}[Theorem $3.6.8$ and $3.7.4$ in \cite{SimonOp}]
There is the following characterization of the dual space of $\cS^p$ for $1\leq p \leq \infty$.
\begin{enumerate}[a)]
    \item We have $\mathcal{K}^\prime=\cS^1$ and $(\cS^1)'=\mathcal{B}(L^2(\R^d))$ where the dual pairing is given by
\[
\langle S,T\rangle_{\cS^1,\mathcal{K}} =\mathrm{tr}(ST^*).
\]
    \item Let $1<p<\infty$ and $p'=p/(p-1)$. Then for any $S\in \cS^p$ and $T\in \cS^{p'}$ we have $ST^*\in \cS^1$ and
\begin{equation}\label{HolderSchatten}
    \left|\langle S,T\rangle_{\cS^p,\cS^{p'}}\right|=\left|\mathrm{tr}(ST^*)\right|\leq \|ST^*\|_{\cS^1}\leq \|S\|_{\cS^p}\|T\|_{\cS^{p'}}.
\end{equation}
Moreover, for any $S\in\cS^p$,
\begin{equation}\label{SchattenDualNormEst}
    \|S\|_{\cS^p}=\sup_{\|T\|_{\cS^{p'}}=1}\left|\mathrm{tr}(ST^*)\right|.
\end{equation}
\end{enumerate}
\end{theorem}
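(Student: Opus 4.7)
The plan is to split the statement into three pieces: the H\"older inequality \eqref{HolderSchatten}, the isometric identification $(\cS^p)^*\cong\cS^{p'}$ with the norm formula \eqref{SchattenDualNormEst} in b), and the two endpoint identifications in a). All three rest on the singular value decomposition of a compact operator combined with classical $\ell^p$-sequence inequalities, plus a Riesz-type representation argument for surjectivity.

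For the H\"older estimate in b), the route is through Weyl's majorization inequality, which yields $\sum_{k\leq n}s_k(AB)\leq \sum_{k\leq n}s_k(A)s_k(B)$. Combined with classical H\"older on $\ell^1$ applied to the sequences $(s_n(S))\in\ell^p$ and $(s_n(T^*))=(s_n(T))\in\ell^{p'}$, this gives
\[
\|ST^*\|_{\cS^1}=\sum_n s_n(ST^*)\leq \sum_n s_n(S)s_n(T)\leq \|S\|_{\cS^p}\|T\|_{\cS^{p'}},
\]
which is \eqref{HolderSchatten}. The endpoint case $p=1$, $p'=\infty$ is immediate from $s_n(ST^*)\leq s_n(S)\|T\|_{\mathcal{B}(\LtRd)}$.

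With H\"older in hand, the map $\Phi(T)(S):=\tr(ST^*)$ is a contraction $\cS^{p'}\to(\cS^p)^*$. To see it is isometric, I would test on an operator adapted to the singular value decomposition $T=\sum_n s_n(T)\varphi_n\otimes\psi_n$: setting $S=\sum_n s_n(T)^{p'-1}\varphi_n\otimes\psi_n\in\cS^p$, a direct computation using orthonormality gives $\|S\|_{\cS^p}^p=\sum_n s_n(T)^{p'}=\|T\|_{\cS^{p'}}^{p'}$ and $\tr(ST^*)=\sum_n s_n(T)^{p'}$, whence $|\tr(ST^*)|=\|S\|_{\cS^p}\|T\|_{\cS^{p'}}$. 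This verifies \eqref{SchattenDualNormEst} and simultaneously that $\Phi$ is an isometric embedding.

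For surjectivity, given $\ell\in(\cS^p)^*$ the sesquilinear form $B(f,g)=\overline{\ell(f\otimes g)}$ is bounded on $\LtRd\times\LtRd$ since $\|f\otimes g\|_{\cS^p}=\|f\|_{L^2}\|g\|_{L^2}$, so Riesz representation produces a bounded operator $T$ with $\langle Tf,g\rangle=B(f,g)$. Testing $\ell$ against finite-rank truncations built from the polar decomposition of $T$, analogous to the operator $S$ constructed above, and taking a supremum promotes $T$ into $\cS^{p'}$ with $\|T\|_{\cS^{p'}}\leq\|\ell\|_{(\cS^p)^*}$; density of finite-rank operators in $\cS^p$ then identifies $\ell$ with $\Phi(T)$. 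Part a) follows by the same scheme at the endpoints: for $\mathcal{K}^\prime=\cS^1$ one runs the argument with $p=\infty$, using that $\cS^1\cdot\mathcal{B}(\LtRd)\subseteq\cS^1$, while for $(\cS^1)'=\mathcal{B}(\LtRd)$ the Riesz form itself is already the bounded operator sought. The main obstacle is the isometric lower bound in b): the upper bound is immediate from H\"older, but matching it requires the explicit test operator adapted to the SVD of $T$, and the same construction is what forces the Riesz operator from the surjectivity step to land in $\cS^{p'}$ rather than merely in $\mathcal{B}(\LtRd)$.
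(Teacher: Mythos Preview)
The paper does not supply a proof of this theorem: it is stated as a background result with attribution to Theorems~3.6.8 and~3.7.4 of Simon's book, and the text moves on immediately afterward without a proof environment. There is therefore nothing in the paper to compare your argument against.

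That said, your sketch is the standard textbook route and is essentially correct. One small point of care: the inequality $\sum_{k\le n}s_k(AB)\le\sum_{k\le n}s_k(A)s_k(B)$ is Horn's inequality (a consequence of Weyl's multiplicative majorization $\prod_{k\le n}s_k(AB)\le\prod_{k\le n}s_k(A)s_k(B)$ together with a log-convexity argument), not Weyl's inequality per se; once you have it, letting $n\to\infty$ and applying classical H\"older on sequences gives \eqref{HolderSchatten} as you indicate. The isometry and surjectivity steps via the SVD test operator and the Riesz form are also the standard manoeuvres and are handled correctly in outline.
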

This implies that, for $1<p<\infty$, the dual space of $\cS^p$ is $\cS^{p'}$ where the dual pairing is given by
\[
\langle S,T\rangle_{\mathcal{S}^p,\cS^{p'}}= \tr(ST^*),\qquad\text{for}~~S\in\cS^p,\,T\in\cS^{p'},
\]
and note that $\cS^p$ is reflexive for $1<p<\infty$. Furthermore, $\cS^2$ is a Hilbert space under this pairing and we denote this inner product by $\langle\cdot,\cdot\rangle_{\mathcal{HS}}$ and refer to $\cS^2$ as the space of Hilbert-Schmidt operators.

\subsection{Basics on Quantum Harmonic Analysis}\label{sec:QHA}

Let us review the key notions and basic results of Quantum Harmonic Analysis (QHA), which provides a convenient setting for harmonic analysis of operators \cite{Werner}. The starting point is the convolution of operators. More concretely, operator-operator convolution and operator-function convolution are defined for $S,T\in\cS^1$ and $F\in L^1(\R^{2d})$ by
\[
S\star T(z)=\tr(S\rho(z)PTP\rho(-z)),\quad F\star S=\iint_{\mathbb{R}^{2d}} f(z)\rho(z)S\rho(-z)dz\qquad\text{for}~~z\in\R^{2d},
\]
where $Pf(x)=f(-x)$ denotes the parity operator, and the integral is understood weakly. It follows from the definition that these convolutions are commutative. They are also associative, see for instance \cite{Luef,Werner}.
\begin{prop}[Proposition $2.1$ in \cite{Luef}]
    For $S,T,R\in\cS^1$ and $F,G\in L^1(\R^{2d})$,
    \begin{align*}
        (R\star S)\star T=&R\star(S\star T),\\
        F*(R\star S)=&(F\star R)\star T,\\
        (F*G)\star R=&F\star(G\star R).
    \end{align*}
\end{prop}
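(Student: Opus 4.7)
The plan is to exploit the fact that all three convolutions in quantum harmonic analysis diagonalise under the appropriate Fourier transforms — convolution becomes pointwise multiplication — so each associativity identity collapses to the associativity of multiplication of scalar functions. Concretely, I would first establish the three intertwining identities
\begin{equation*}
\Fs(F * G) = \Fs(F)\Fs(G), \qquad \Fw(F \star S) = \Fs(F)\Fw(S), \qquad \Fs(S \star T) = \Fw(S)\Fw(T),
\end{equation*}
valid for $F,G\in L^1(\Rdd)$ and $S,T\in \cS^1$. The first is classical. The second follows from the definition $F\star S = \iint F(z)\rho(z)S\rho(-z)\,dz$ via a Fubini interchange and the covariance of $\rho$ under conjugation. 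The third is obtained by unwinding $\tr\bigl(\rho(w)^*\, S\rho(z)PTP\rho(-z)\bigr)$ using cyclicity of the trace and the Schr\"odinger commutation relation $\rho(z)\rho(w) = e^{-\pi i\sigma(z,w)}\rho(z+w)$.

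Once these intertwining identities are in hand, each associativity claim reduces to a trivial product manipulation. For the operator identity $(F*G)\star R = F\star(G\star R)$, applying $\Fw$ to both sides yields $\Fs(F)\Fs(G)\Fw(R)$ in either case, and injectivity of $\Fw$ on $\cS^1$ returns the operator identity — here $G\star R \in \cS^1$ by the Werner--Young inequality, so all terms lie in the correct spaces. The function identity $F*(R\star S)=(F\star R)\star S$ is handled identically via $\Fs$, with both sides collapsing to $\Fs(F)\Fw(R)\Fw(S)$. Finally, for $(R\star S)\star T = R\star(S\star T)$, applying $\Fw$ reduces both sides to the product $\Fw(R)\Fw(S)\Fw(T)$, and injectivity of $\Fw$ on $\cS^1$ closes the argument.

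The main obstacle is the third intertwining identity $\Fs(S\star T)=\Fw(S)\Fw(T)$. It requires careful bookkeeping of the symplectic phases introduced by $\rho(z)\rho(w)=e^{-\pi i\sigma(z,w)}\rho(z+w)$ and by the conjugation $P\rho(z)P = \rho(-z)$, together with a Fubini step justified by the assumption $S,T\in\cS^1$. Once this is established, the rest of the proof is purely formal. A direct alternative — expanding each associativity claim as an iterated operator-valued integral and applying Fubini, trace cyclicity, and the same commutation relation — is available but essentially repeats the same calculation three times, so pooling the work on the Fourier side is cleaner and more transparent.
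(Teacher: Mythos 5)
The statement is quoted from Luef--Skrettingland (cited as Proposition~2.1 in \cite{Luef}) and the present paper does not reproduce its proof, so there is no in-paper argument to compare against; the evaluation is on the merits of your proposal alone. You have also tacitly corrected a typo in the second identity, which should read $F*(R\star S)=(F\star R)\star S$, consistent with the way you handled it.

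Your plan is correct and complete in outline. The two convolution intertwining identities $\Fw(F\star S)=\Fs(F)\Fw(S)$ and $\Fs(S\star T)=\Fw(S)\Fw(T)$ are Werner's convolution theorem (recorded in the paper as Proposition~\ref{IntertwiningProp}), proved independently of any associativity statement, so there is no circularity in invoking them. The bookkeeping you flag is also essentially in order: by the Werner--Young inequality every intermediate object lands in the right space. Concretely, $R\star S\in L^1(\Rdd)$ so $(R\star S)\star T\in\cS^1$ and likewise $R\star(S\star T)\in\cS^1$; $F*(R\star S)\in L^1$ and $(F\star R)\star S\in L^1$; $F*G\in L^1$ gives $(F*G)\star R\in\cS^1$ and $G\star R\in\cS^1$ gives $F\star(G\star R)\in\cS^1$. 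Thus in the operator cases $\Fw$ may be applied, and since $\cS^1\subset\cS^2$ and $\Fw$ is unitary $\cS^2\to L^2(\Rdd)$ (Pool's theorem), it is injective; in the function case $\Fs$ is injective on $L^1$ by the classical uniqueness theorem. Applying the intertwining identities to both sides of each claim and matching up the scalar products $\Fw(R)\Fw(S)\Fw(T)$, $\Fs(F)\Fw(R)\Fw(S)$, and $\Fs(F)\Fs(G)\Fw(R)$ respectively then finishes the argument exactly as you describe. Your chosen route (reduce to multiplicativity on the Fourier side) is the cleaner one compared with the direct alternative you mention (Fubini plus cyclicity of the trace plus $\rho(z)\rho(w)=e^{-\pi i\sigma(z,w)}\rho(z+w)$ and $P\rho(z)P=\rho(-z)$), which requires repeating a similar but longer computation three times; the direct route has the minor advantage of not presupposing Werner's convolution theorem, so the choice is a matter of taste and of what you already have in hand.
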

An important tool in classical harmonic analysis is Young's inequality for $L^p$-functions. We state here an analog for the setting of quantum harmonic analysis, see \cite[Prop. 3.2(v)]{Werner} or \cite[Prop. 2.2]{Luef}.
\begin{prop}[Werner-Young's inequality]
Suppose $S\in \cS^p$, $T\in \cS^q$ and $F\in L^p(\R^{2d})$ with $1+\tfrac{1}{r}=\tfrac{1}{p}+\tfrac{1}{q}$.

Then $S\star T\in L^r(\R^{2d})$, $F\star T\in \cS^r$, and
\begin{align}
    &\|S\star T\|_{L^{r}}\leq \|S\|_{\cS^p}\|T\|_{\cS^q}\label{Young op-op},\\
    &\|F\star T\|_{\cS^r}\leq \|F\|_{L^p}\|T\|_{\cS^q}\label{Young op-func}.
\end{align}
\end{prop}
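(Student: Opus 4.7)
The plan is to establish endpoint estimates and then fill in the intermediate exponents by complex interpolation for Schatten classes. For the operator-operator convolution $S\star T$, I first address the $(p,q,r)=(1,\infty,\infty)$ endpoint: the identity $S\star T(z)=\tr(S\rho(z)PTP\rho(-z))$, combined with the Schatten-class H\"older inequality $|\tr(XY)|\le\|X\|_{\cS^1}\|Y\|_{\cS^\infty}$ and the unitarity of $\rho(z)$ and $P$, yields the pointwise bound $|S\star T(z)|\le\|S\|_{\cS^1}\|T\|_{\cS^\infty}$. Commutativity of $\star$ delivers the symmetric endpoint $(\infty,1,\infty)$. The genuinely delicate endpoint is $(1,1,1)$: expanding $S$ and $T$ through their singular value decompositions reduces matters to rank-one inputs $S=\varphi_1\otimes\varphi_2$, $T=\psi_1\otimes\psi_2$, for which a direct unwinding of the definition expresses $S\star T(z)$ as a product of two cross-ambiguity functions evaluated at $\pm z$. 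Cauchy-Schwarz in $z$ together with Moyal's identity then produces $\|S\star T\|_{L^1}\le\|\varphi_1\|\|\varphi_2\|\|\psi_1\|\|\psi_2\|=\|S\|_{\cS^1}\|T\|_{\cS^1}$; the triangle inequality applied to the SVD expansions and density of finite-rank operators in $\cS^1$ finish this endpoint. Riesz-Thorin interpolation for Schatten classes, applied in one slot at a time, produces the operator-operator inequality in the full admissible range.

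For the operator-function convolution the endpoint $(p,q,r)=(1,q,q)$ is essentially free: Minkowski's inequality applied to the weak integral $F\star T=\int F(z)\rho(z)T\rho(-z)\,dz$ in $\cS^q$, combined with $\|\rho(z)T\rho(-z)\|_{\cS^q}=\|T\|_{\cS^q}$, gives $\|F\star T\|_{\cS^q}\le\|F\|_{L^1}\|T\|_{\cS^q}$ for every $1\le q\le\infty$. For the remaining endpoint $(\infty,1,\infty)$ I exploit the op-op bound just proved via $\cS^1$-$\cS^\infty$ duality: for $A\in\cS^1$,
\[
\tr\!\left(A(F\star T)\right)=\int F(z)\,\tr\!\left(A\rho(z)T\rho(-z)\right)dz=\int F(z)\,(A\star PTP)(z)\,dz,
\]
so $|\tr(A(F\star T))|\le\|F\|_{L^\infty}\|A\star PTP\|_{L^1}\le\|F\|_{L^\infty}\|A\|_{\cS^1}\|T\|_{\cS^1}$, and taking the supremum over $\|A\|_{\cS^1}\le 1$ yields $\|F\star T\|_{\cS^\infty}\le\|F\|_{L^\infty}\|T\|_{\cS^1}$. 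Bilinear complex interpolation between these endpoints, with the Schatten exponent in the $(1,q,q)$ family serving as an auxiliary parameter, then covers all triples satisfying $1+1/r=1/p+1/q$.

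The principal obstacle I foresee is the $(1,1,1)$ endpoint for the operator-operator inequality: reducing to rank one, identifying $S\star T$ with an explicit product of cross-ambiguity functions, and tracking the adjoint $\rho(z)^\ast$, the parity $P$, and the projective phases of the Schr\"odinger representation requires careful bookkeeping, so that the sign and conjugate conventions line up for Moyal's identity to close the argument. Once this endpoint is in hand, the remainder of the proof---density of finite-rank operators, Riesz-Thorin in each slot, and the duality step feeding the op-op bound back into the op-func bound---is routine.
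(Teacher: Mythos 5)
The paper does not prove this proposition: it is stated with a citation to Werner \cite[Prop.\ 3.2(v)]{Werner} and Luef--Skrettingland \cite[Prop.\ 2.2]{Luef}, so there is no in-paper argument to compare against. Your proof is correct and follows the same strategy as those references: establish endpoints and interpolate. The $(1,\infty,\infty)$ bound from Schatten--H\"older and unitarity of $\rho(z)$ and $P$ is immediate, and commutativity gives $(\infty,1,\infty)$. Your $(1,1,1)$ endpoint via the singular value decomposition, the rank-one identity $S\star T(z)=A(\varphi_1,P\psi_2)(z)\,\overline{A(\varphi_2,P\psi_1)(z)}$, Cauchy--Schwarz, and Moyal is exactly the argument used by Luef--Skrettingland and recovers the sharp constant $1$ cleanly, avoiding the factor losses one would incur by decomposing into positive parts. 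The $(1,q,q)$ op-func endpoint via Minkowski's integral inequality in $\cS^q$ and the duality step that feeds the op-op bound $\|A\star PTP\|_{L^1}\le\|A\|_{\cS^1}\|T\|_{\cS^1}$ back into $\|F\star T\|_{\cS^\infty}\le\|F\|_{L^\infty}\|T\|_{\cS^1}$ are both right.

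One small imprecision worth noting: for the op-op case, ``one slot at a time'' interpolation from just the three vertex endpoints you name does not quite close on its own. After interpolating in the $T$-slot with $S\in\cS^1$ fixed to obtain $(1,q,q)$, the second interpolation in the $S$-slot with $T\in\cS^q$ fixed needs the endpoint $(q',q,\infty)$, i.e.\ the full Schatten--H\"older line $|S\star T(z)|\le\|S\|_{\cS^{q'}}\|T\|_{\cS^q}$, not merely its two corners. This of course follows at once from the same H\"older argument you already use, so it is a bookkeeping point, not a gap; alternatively one can invoke a genuine two-parameter bilinear Riesz--Thorin (Stein interpolation for $\cS^p$-valued analytic families) from the three vertices, as your phrase ``bilinear complex interpolation'' in the op-func half suggests. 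Either way the argument is sound; the one step you flag as delicate, the phase and parity bookkeeping in the rank-one $(1,1,1)$ computation, is indeed the only part that requires real care, and the formula above confirms it works out.
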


For $T\in\cS^1$ we define the Fourier-Wigner transform of $T$ at a point $z\in\R^{2d}$ by
\[
\F_W(T)(z)=\tr(T\rho(-z))=\langle T,\rho(z)\rangle_{\mathcal{\mathcal{HS}}}.
\]
This defines a bounded function on $\R^{2d}$ as
\[
|\F_W(T)(z)|=|\tr(T\rho(-z))|\leq \|T\|_{\cS^1}\|\rho\|=\|T\|_{\cS^1},
\]
by \cite[Thm. 3.6.6]{SimonOp}. Moreover, for $T\in \cS^1$ it follows that $\F_W(T)\in C_0(\R^{2d})$ by an analogue of the Riemann-Lebesgue lemma found in \cite{Werner}. By a known result of Pool\footnote{Pool showed that the Weyl quantization $a\mapsto L_a$ extends to a unitary operator from $L^2(\R^{2d})$ to $\cS^2$. For $a\in\mathscr{S}(\Rdd)$, the Weyl quantization may be written as $L_a=\F_W^{-1}(\F_\sigma(a))$, and thus the Fourier-Wigner transform defines a unitary operator from $\cS^2$ to $L^2(\Rdd)$ by the isomorphism of the symplectic Fourier transform on $L^2(\Rdd)$.} \cite{Pool}, the Fourier-Wigner transform extends to a unitary operator from $\cS^2$ to $L^{2}(\R^{2d})$. We therefore obtain Hausdorff-Young's inequality through interpolation for operator convolutions, \cite[Prop. 3.4]{Werner}.
\begin{prop}[Werner--Hausdorff--Young Theorem]\label{Operator Hausdorff-Young}
    Let $1\leq p\leq 2$, and $p^{-1}+p'^{-1}=1$. If $T\in \cS^p$, then $\F_W(T)\in L^{p'}(\R^{2d})$ and
    \begin{equation}\label{Hausdorff-Young Operator}
        \|F_W(T)\|_{L^{p'}(\R^{2d})}\leq \|T\|_{\cS^p}.
    \end{equation}
\end{prop}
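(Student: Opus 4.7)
The plan is to prove the inequality by complex interpolation between the two endpoint cases $p=1$ and $p=2$, mirroring the classical proof of the Hausdorff-Young inequality.

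First I would record the two endpoints. For $p=1$ the bound $\|\F_W(T)\|_{L^\infty(\R^{2d})} \le \|T\|_{\cS^1}$ is immediate from the estimate $|\tr(T\rho(-z))| \le \|T\|_{\cS^1}\|\rho(-z)\|_{\mathrm{op}} = \|T\|_{\cS^1}$ that was already noted above, so $\F_W : \cS^1 \to L^\infty(\R^{2d})$ with norm at most $1$. For $p=2$, Pool's theorem (referenced in the footnote) gives that $\F_W$ extends to a unitary isomorphism $\cS^2 \to L^2(\R^{2d})$, in particular $\|\F_W(T)\|_{L^2(\R^{2d})} = \|T\|_{\cS^2}$, so $\F_W : \cS^2 \to L^2(\R^{2d})$ with norm $1$.

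Next I would apply the Riesz-Thorin interpolation theorem. The couples $(\cS^1, \cS^2)$ and $(L^\infty(\R^{2d}), L^2(\R^{2d}))$ are both compatible complex interpolation couples. On the function side this is the usual fact that $[L^\infty, L^2]_\theta = L^{p'}$ with $\tfrac{1}{p'} = \tfrac{\theta}{2}$. On the operator side, the standard result (Calderón's complex interpolation of Schatten classes, see e.g. \cite{SimonOp}) gives $[\cS^1, \cS^2]_\theta = \cS^p$ with $\tfrac{1}{p} = 1 - \tfrac{\theta}{2}$. Choosing $\theta = 2/p'$ matches these exponents via $\tfrac{1}{p}+\tfrac{1}{p'}=1$, so interpolating the two endpoint estimates yields a bounded linear map $\F_W : \cS^p \to L^{p'}(\R^{2d})$ with norm at most $1^{1-\theta}\cdot 1^\theta = 1$, which is exactly \eqref{Hausdorff-Young Operator}.

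There is no real obstacle here; the slight subtlety is that one must cite (or note) that the Schatten scale is a complex interpolation scale, but this is a classical result. For the statement to be meaningful one should also verify that $\F_W$ is initially well-defined as a linear map on a dense subspace of $\cS^p$ common to $\cS^1$ and $\cS^2$ (for instance finite-rank operators, which are dense in every $\cS^p$ for $1 \le p < \infty$), and then extend by density once the interpolated bound is in hand. This extension step is routine since the interpolation inequality controls the $L^{p'}$-norm by the $\cS^p$-norm of the input.
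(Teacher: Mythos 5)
Your proof is correct and matches the approach the paper indicates: the paper also appeals to the trivial $\cS^1 \to L^\infty$ bound, Pool's theorem for the unitary $\cS^2 \to L^2$ endpoint, and then interpolation, citing Werner's Proposition 3.4 rather than spelling out the Riesz--Thorin/Schatten-interpolation details as you do.
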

Let us look at the most elementary instance of a Fourier-Wigner transform.
\begin{example}
Let $f,g\in L^2(\mathbb{R}^d)$ and consider the rank one operator $f\otimes g\in \cS^1$. Then we have that
\begin{equation}\label{Fourier-Wigner-rank-one}
    \F_W(f\otimes g)=A(f,g),
\end{equation}
where $A(f,g)$ denotes the cross-ambiguity function of $f$ and $g$ as defined in Equation \eqref{AmbiguityFunc}.
\end{example}

The Fourier-Wigner transform interacts with convolutions in the same fashion as the classical Fourier transform. The following intertwining properties of the Fourier-Wigner transform and convolutions are proven in \cite[Prop. 3.4]{Werner}.
\begin{prop}[Werner's convolution theorem]\label{IntertwiningProp}
    Let $S,T\in\cS^1$ and $F\in L^1(\R^{2d})$. Then
    \begin{align}
        \F_\sigma(S\star T)=&\F_W(S)\F_W(T)\\
        \F_W(F\star S)=&\F_\sigma(F)\F_W(S).
    \end{align}
\end{prop}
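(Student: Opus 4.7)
The plan is to treat the two identities separately: the function-operator identity by direct trace computation, and the operator-operator identity by reduction to rank-one operators via a density argument. In both cases, the Werner--Young inequality ensures every object lives in the appropriate $\cS^p$ or $L^p$ space, so traces may be interchanged with integrals, and the key algebraic input is the projective composition law
\[
\rho(z_1)\rho(z_2)=e^{\pi i\sigma(z_1,z_2)}\rho(z_1+z_2),
\]
which follows by direct calculation from the explicit form \eqref{SchrodingerRepresentation}.

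For the identity $\F_W(F\star S)=\F_\sigma(F)\F_W(S)$, I would first observe that $F\star S\in\cS^1$ by \eqref{Young op-func} with $p=q=1$, so pulling the trace inside the defining integral of $F\star S$ is justified:
\[
\F_W(F\star S)(\zeta)=\iint_{\Rdd}F(z)\,\tr\bigl(\rho(z)S\rho(-z)\rho(-\zeta)\bigr)\,dz.
\]
Cyclicity rewrites the integrand as $\tr(S\,\rho(-z)\rho(-\zeta)\rho(z))$, and two applications of the composition law, together with $\sigma(z,-z)=0$ and antisymmetry of $\sigma$, collapse the three Schr\"odinger factors to $e^{2\pi i\sigma(z,\zeta)}\rho(-\zeta)$. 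Factoring the phase out of the trace yields $\F_W(S)(\zeta)\int F(z)e^{-2\pi i\sigma(\zeta,z)}\,dz=\F_\sigma(F)(\zeta)\F_W(S)(\zeta)$.

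For the identity $\F_\sigma(S\star T)=\F_W(S)\F_W(T)$, I would argue by density. Both sides are continuous from $\cS^1\times\cS^1$ into $L^\infty(\Rdd)$: on the left, $\|\F_\sigma(S\star T)\|_\infty\le\|S\star T\|_{L^\infty}\le\|S\|_{\cS^1}\|T\|_{\cS^1}$ by \eqref{Young op-op} with $r=\infty$, and on the right by the trivial bound $\|\F_W(R)\|_\infty\le\|R\|_{\cS^1}$. Since finite-rank operators are dense in $\cS^1$, it suffices to verify the identity when $S=f_1\otimes g_1$ and $T=f_2\otimes g_2$. Using $P(f_2\otimes g_2)P=Pf_2\otimes Pg_2$, the unitarity relation $\rho(-z)^*=\rho(z)$, and the multiplication rule $(u\otimes v)(w\otimes y)=\langle w,v\rangle\, u\otimes y$, a direct calculation gives
\[
S\star T(z)=\overline{A(g_1,Pf_2)(z)}\,A(f_1,Pg_2)(z),
\]
whereas \eqref{Fourier-Wigner-rank-one} yields $\F_W(S)\F_W(T)=A(f_1,g_1)\,A(f_2,g_2)$. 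Applying $\F_\sigma$ to the product on the left via Lemma \ref{FourierOfWigner} (which identifies $\F_\sigma A(f,g)=W(f,g)$), together with the elementary symmetries $A(Pf,Pg)(z)=A(f,g)(-z)$ and $\F_\sigma(\overline{H})(z)=\overline{\F_\sigma(H)(-z)}$, turns the claim into a Moyal-type identity verifiable by a direct computation in the defining integrals.

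I expect the main obstacle to be this last rank-one verification: tracking the parity operators and complex conjugates through the symplectic Fourier transform of the product of two cross-ambiguity functions, and matching the resulting convolution with the pointwise product $A(f_1,g_1)A(f_2,g_2)$. The bookkeeping of phases from the composition law also requires care, but it is routine compared to the Moyal-type identification. Once identity~(2) is in hand, identity~(1) could alternatively be approached through the unitarity of $\F_W:\cS^2\to L^2(\Rdd)$ and Plancherel, reducing the claim to a dense subclass of $\cS^2$, where the same rank-one computation applies.
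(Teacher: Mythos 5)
Your proof of the second identity, $\F_W(F\star S)=\F_\sigma(F)\F_W(S)$, is complete and correct: the composition law applied twice gives $\rho(-z)\rho(-\zeta)\rho(z)=e^{2\pi i\sigma(z,\zeta)}\rho(-\zeta)$, and factoring the phase out of the trace yields the claim. The paper itself provides no proof of this proposition (it cites Werner's Prop.~3.4), so there is no in-text argument to compare against; the direct trace computation is the expected route.

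For the first identity, however, there is a genuine gap. You correctly reduce to rank-one operators and correctly derive $S\star T(z)=\overline{A(g_1,Pf_2)(z)}\,A(f_1,Pg_2)(z)$, but the final step --- showing that $\F_\sigma$ of this product equals $A(f_1,g_1)A(f_2,g_2)$ --- is deferred to a ``Moyal-type identity verifiable by direct computation,'' which you identify as the main obstacle and never carry out. This step is not routine bookkeeping: $\F_\sigma$ of a pointwise product is a convolution of symplectic Fourier transforms, so you would need the nontrivial convolution identity $W(f_1,g_1)*W(f_2,g_2)=A(f_1,Pg_2)\,\overline{A(g_1,Pf_2)}$, whose verification is essentially as much work as the theorem itself. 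The plan also slightly misuses Lemma~\ref{FourierOfWigner}, which converts a single $A$ into a single $W$, not a product into a product.

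A cleaner route that avoids the rank-one reduction entirely mirrors your own argument for the second identity. Using the inversion $T=\iint\F_W(T)(\zeta)\rho(\zeta)\,d\zeta$ and the relation $P\rho(\zeta)P=\rho(-\zeta)$, one gets $PTP=\iint\F_W(T)(\zeta)\rho(-\zeta)\,d\zeta$. Conjugating by $\rho(z)$ and applying the composition law twice gives $\rho(z)\rho(-\zeta)\rho(-z)=e^{-2\pi i\sigma(z,\zeta)}\rho(-\zeta)$, and taking the trace against $S$ yields
\begin{equation*}
S\star T(z)=\iint\F_W(S)(\zeta)\F_W(T)(\zeta)\,e^{-2\pi i\sigma(z,\zeta)}\,d\zeta=\F_\sigma\bigl(\F_W(S)\F_W(T)\bigr)(z),
\end{equation*}
from which the identity follows since $\F_\sigma^{-1}=\F_\sigma$. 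This is the same genre of computation you already completed for the function--operator case and closes the gap without the rank-one detour.
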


\subsection{Wiener's Tauberian Theorem}
Wiener's Tauberian theorem is a classical result in harmonic analysis \cite{Wiener}. The theorem states that if $f\in L^\infty(\R^d)$, and there exists $h\in L^1(\R^d)$ with non-vanishing Fourier transform and $A\in\mathbb{C}$ such that 
\[
\lim_{x\to \infty} (f*h)(x)=A\int_{\R^d} h(x)dx,
\]
then for any $g\in L^1(\R^d)$,
\[
\lim_{x\to \infty} (f*g)(x)=A\int_{\R^d} g(x)dx.
\]
The proof is based on Wiener's approximation theorem. For $f\in L^1(\R^d)$ it states that $\overline{\text{span}}\{T_xf:x\in\R^d\}=L^1(\R^d)$ if and only if $\F(f)(\xi)\neq 0$ for any $\xi\in\R^d$.

In \cite{Luef}, Luef and Skrettingland extended Wiener's Tauberian theorem to the setting of operators, where they proved the following.
\begin{theorem}[Theorem $4.1$ in \cite{Luef}]\label{TauberianTheorem}
    Let $F\in L^\infty(\R^{2d})$, and assume that one of the following equivalent statements holds for some $A\in\mathbb{C}$.
    \begin{enumerate}[i)]
        \item There is some $S\in\cS^1$, with $\F_W(S)(z)\neq 0$ for all $z\in \R^{2d}$, such that
        \begin{equation*}
            F\star S=A\cdot\mathrm{tr}(S) I_{L^2}+K
        \end{equation*}
        for some $K\in\mathcal{K}$.
        \item There is some $a\in L^1(\R^{2d})$, with $\F_\sigma(a)\neq 0$ for all $z\in \R^{2d}$, such that
        \begin{equation*}
            F*a=A\iint_{\R^{2d}} a(z)dz+H
        \end{equation*}
        for some $H\in C_0(\R^{2d})$.
    \end{enumerate}
    Then both of the following statements hold:
    \begin{enumerate}[a)]
        \item For any $T\in\cS^1$, $F\star T=A\cdot\mathrm{tr}(T)I_{L^2}+K_T$ for some $K_T\in\mathcal{K}$.
        \item For any $G\in L^1(\Rdd)$, $F*G=A\iint_{\R^{2d}} G(z)dz+H_G$ for some $H_G\in C_0(\R^{2d})$.
    \end{enumerate}
\end{theorem}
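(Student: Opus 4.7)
The plan is to establish the equivalence (i) \(\Leftrightarrow\) (ii) via a Gaussian-witness exchange between the operator and function formulations, then derive (a) from (i) by a Tauberian closed-subspace argument in \(\cS^1\), and observe that (b) follows from (ii) directly by the classical Wiener Tauberian theorem.

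\textbf{Equivalence of (i) and (ii).} I fix the Gaussian rank-one projection \(R := g_0 \otimes g_0 \in \cS^1\); by \eqref{STFTgaussian}, its Fourier-Wigner transform \(\F_W(R)(z) = A(g_0,g_0)(z) = e^{-\pi|z|^2/2}\) is nowhere zero. Assuming (i), take \(a := S \star R \in L^1(\Rdd)\) by Werner-Young, so \(\F_\sigma(a) = \F_W(S)\F_W(R)\) is nowhere zero by Werner's convolution theorem. Using the associativity \((F \star S) \star R = F * (S \star R)\), the elementary identity \(I \star R = \tr(R)\mathbf{1}\) (direct trace calculation), and the fact that \(K \star R \in C_0(\Rdd)\) whenever \(K \in \mathcal{K}\) and \(R \in \cS^1\) (approximate \(K\) in operator norm by finite rank operators and note that rank-one \(\star\) \(\cS^1\) is expressible via cross-ambiguity functions, which lie in \(C_0\)),
\begin{equation*}
F * a = (A\tr(S) I + K) \star R = A\tr(S)\tr(R)\mathbf{1} + K \star R = A\Bigl(\iint a\Bigr)\mathbf{1} + H, \qquad H \in C_0,
\end{equation*}
which is (ii). The converse (ii) \(\Rightarrow\) (i) is symmetric: set \(S := a \star R \in \cS^1\) and use the dual identity \(\mathbf{1} \star R = \tr(R) I\) (which follows from the inversion formula in Lemma \ref{inversion formula}) together with the associativity \(F \star (a \star R) = (F * a) \star R\).

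\textbf{(i) \(\Rightarrow\) (a).} Introduce \(\mathcal{V} := \{T \in \cS^1 : F \star T - A\tr(T) I \in \mathcal{K}\}\); it is a closed subspace of \(\cS^1\) since \(T \mapsto F \star T\) is bounded \(\cS^1 \to \mathcal{B}(\LtRd)\) by \eqref{Young op-func}, the trace is \(\cS^1\)-continuous, and \(\mathcal{K}\) is norm-closed in \(\mathcal{B}(\LtRd)\). By Fubini and the cocycle relation \(\rho(w)\rho(v)\rho(-v)\rho(-w) = \rho(w+v)\rho(-(w+v))\), one obtains for every \(f \in L^1(\Rdd)\)
\begin{equation*}
F \star (f \star S) = (F*f) \star S = \iint_{\Rdd} f(w)\,\rho(w)(F \star S)\rho(-w)\,dw = A\tr(S)\Bigl(\iint f\Bigr) I + f \star K_0,
\end{equation*}
where \(K_0 := F \star S - A\tr(S) I \in \mathcal{K}\) by hypothesis (i); the residual \(f \star K_0\) is compact since \(\|f \star T\|_{\mathrm{op}} \leq \|f\|_{L^1}\|T\|_{\mathrm{op}}\) (Werner-Young) allows norm-approximation of \(K_0\) by finite rank operators, and the ideal structure preserves compactness. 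Since \(\tr(f \star S) = \tr(S)\iint f\), this shows \(f \star S \in \mathcal{V}\) for every \(f \in L^1\). Invoking the operator Wiener approximation theorem—\(\{f \star S : f \in L^1\}\) is dense in \(\cS^1\) whenever \(\F_W(S)\) is nowhere zero—the closedness of \(\mathcal{V}\) forces \(\mathcal{V} = \cS^1\), which is (a).

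\textbf{(ii) \(\Rightarrow\) (b) and main obstacle.} Because the symplectic Fourier transform is the ordinary Fourier transform composed with the rotation \(J\) (Lemma \ref{ExtensionOfResToSymp}), the hypothesis that \(\F_\sigma(a)\) is nowhere zero is equivalent to \(\widehat{a}\) being nowhere zero; hence (ii) is precisely the hypothesis of the classical Wiener Tauberian theorem applied to \(F \in L^\infty(\Rdd)\). The classical theorem then yields \(F * G - A\iint G \in C_0(\Rdd)\) for every \(G \in L^1(\Rdd)\), which is (b). The chief obstacle is the operator Wiener approximation theorem used in the previous step: proving density of \(\{f \star S : f \in L^1\}\) in \(\cS^1\) under \(\F_W(S) \neq 0\) requires a duality/Hahn-Banach argument in \(\cS^1 \cong \mathcal{K}'\), applied to the intertwining \(\F_W(f \star S) = \F_\sigma(f)\F_W(S)\), and ultimately a reduction to the classical Wiener approximation theorem in \(L^1\) via the isometry \(\F_W : \cS^2 \to L^2(\Rdd)\) together with the density of \(\cS^1 \cap \cS^2\) in \(\cS^1\).
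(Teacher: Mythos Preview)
The paper does not contain a proof of this theorem: it is stated as background (Theorem~4.1 in \cite{Luef}) and invoked as a tool, so there is no in-paper argument to compare your proposal against. Your sketch is broadly in the spirit of the original Luef--Skrettingland proof, which also hinges on the operator Wiener approximation theorem (the density of $\{f\star S : f\in L^1(\Rdd)\}$ in $\cS^1$ when $\F_W(S)$ is zero-free) together with the Gaussian bridge between the function and operator formulations.

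Two small points are worth flagging. First, the associativity identities you use, e.g.\ $(F*f)\star S = F\star(f\star S)$ and $F\star(f\star S)=f\star(F\star S)$, are stated in the present paper only for $F\in L^1(\Rdd)$ (Proposition~2.1); extending them to $F\in L^\infty(\Rdd)$ is true but requires a duality or approximation step you should make explicit. Second, the claim that $K\star R\in C_0(\Rdd)$ for $K\in\mathcal{K}$ and $R\in\cS^1$ (used in the (i)$\Rightarrow$(ii) direction) is correct, but the approximation you sketch needs the uniform bound $|K\star R(z)-K_n\star R(z)|\le \|K-K_n\|_{\mathrm{op}}\|R\|_{\cS^1}$ together with the fact that each $K_n\star R$ (with $K_n$ finite rank) is a finite sum of products of cross-ambiguity functions, hence in $C_0$; this is easy but should be said. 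The chief obstacle you identify---the operator Wiener approximation theorem---is indeed the substantive ingredient, and your indicated route (Hahn--Banach plus the intertwining $\F_W(f\star S)=\F_\sigma(f)\F_W(S)$ to reduce to the classical Wiener approximation theorem) is the standard one.
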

Here $I_{L^2}$ denotes the identity operator on $L^2(\R^{d})$. In a recent preprint \cite{Fulsche_Luef_Werner_24} this result was extended using Werner's correspondence theory and the theory of limit operators. 
\section{Fourier restriction for Schatten class operators}

In this section we will prove the theorems announced in Section \ref{sec:Intro} and we start with the basic insight that the Fourier restriction theorem for the symplectic Fourier transform is in fact equivalent to the Fourier restriction theorem for Schatten class operators, Theorem \ref{equi-Thm}.

\begin{proof}[Proof of Theorem \ref{equi-Thm}]
We start with an elementary observation: Since $\mu$ is compactly supported we denote $R=\text{diam}(\mathrm{supp}(\mu))<\infty$. Fix $z_0\in \R^{2d}$ such that $\text{supp}(\mu)\subset B(z_0,R)$.
\\~~\\
${ i)\Rightarrow ii):}$ We assume that there exists a constant $C_\sigma>0$ such that 
\begin{equation*}\label{First-assertion}
\|\mathcal{F}_\sigma(G)\|_{L^q(\mu)}\leq C_\sigma\|G\|_{L^p},
\end{equation*}
for every $G\in L^p(\R^{2d})$. 
Consider $T\in \cS^p$ and associate its cross-Cohen's class, which was introduced in \cite{Luef_mixed},
\[
Q_T(\varphi,\psi)(z)=(\psi\otimes \varphi)\star T(z),\qquad\text{for}~~\varphi,\psi\in L^2(\R^d).
\]
It follows from Werner-Young's inequality,  \eqref{Young op-op}, that $Q_T(\varphi,\psi)\in L^p(\R^{2d})$. Moreover, Proposition \ref{IntertwiningProp} and \eqref{Fourier-Wigner-rank-one} gives the point-wise result
\[
|\mathcal{F}_\sigma(Q_T(\varphi,\psi))(z)|=|\mathcal{F}_W(T)(z)\mathcal{F}_W(\psi\otimes \varphi)(z)|=|\F_W(T)(z)|\, |A(\psi,\varphi)(z)|.
\]
We will now consider a specific choice of $\varphi$ and $\psi$. Let $\varphi(t)=g_0(t)=2^{d/4}\exp{(-\pi |t|^2)}$ be the $L^2$-normalized Gaussian, and $\psi=\rho(z_0)g_0$ a time-frequency shifted Gaussian. In this case, it follows from \eqref{STFTgaussian} that
\[
|A(g_{z_0},g_{0})(z)|=e^{-\pi\frac{|z-z_0|^2}{2}}\geq e^{-\pi\frac{R^2}{2}},
\]
for all $z\in\text{supp}(\mu)$ by Lemma \ref{Covariance property}. We therefore have a lower bound on the point-wise estimate of $\F_\sigma(Q_T(g_0,g_{z_0}))$,
\[
|\mathcal{F}_\sigma(Q_T(g_0,g_{z_0}))(z)|=|\F_W(T)(z)|\, |A(g_{z_0},g_0)(z)|\geq e^{-\pi\frac{R^2}{2}}|\F_W(T)(z)|,
\]
for all $z\in \text{supp}(\mu)$.
Thus, by the assumption that $\mathrm{i)}$ holds, there exists a constant $C_\sigma>0$ such that
\[
e^{-q\pi\frac{R^2}{2}}\int |\F_W(T)|^qd|\mu|\leq\int |\F_\sigma(Q_T(g_{0},g_{z_0}))|^qd|\mu|\leq C_\sigma^q\|Q_T(g_{0},g_{z_0})\|_{L^p}^q\leq C_\sigma^q\|T\|_{\cS^p}^q,
\]
where we used $\|g_{z_0}\otimes g_{0}\|_{\cS^1}=\|g_{z_0}\|_{L^2}\|g_{0}\|_{L^2}=1$. This shows that $\mathrm{i)}$ implies $\mathrm{ii)}$.
\\~~\\
On the other hand, if we assume $\mathrm{ii)}$, then there exists $C_W>0$ such that
\[
\|\F_W(T)\|_{L^q(\mu)}\leq C_W\|T\|_{\cS^p},
\]
for all $T\in\cS^p$. In particular, for any $F\in L^p(\R^{2d})$ we can consider the localization operator 
\[
\mathcal{A}_F^{\varphi,\psi}=(\psi\otimes\varphi)\star F, 
\]
which is an element of $\cS^p$ by Young's inequality, \eqref{Young op-func}. By the convolution property of the Fourier-Wigner transform, Proposition \ref{IntertwiningProp},
\[
\F_W(\mathcal{A}_F^{\varphi,\psi})(z)=\F_W(\psi\otimes \varphi)(z)\F_\sigma(F)(z).
\]
By choosing $\varphi=g_0$ and $\psi=g_{z_0}$, as above, it follows from the same estimate of the ambiguity function that
\[
e^{-q\pi\frac{R^2}{2}}\int |\F_\sigma(F)|^q\,d|\mu|\leq\int |\F_W(\mathcal{A}_F^{\varphi,\psi})|^q\,d|\mu|\leq C_W^q\|\mathcal{A}_F^{\varphi,\psi}\|_{\cS^p}^q\leq C_W^q\|F\|_{L^{p}(\R^{2d})}^q,
\]
which proves $\mathrm{i)}$.
\end{proof}

A well-studied object in the literature on Fourier restriction is the so-called extension operator
\[
\E_\sigma\varphi(z)=\F^{-1}_\sigma(\varphi d\mu)(z)=\F_\sigma(\varphi d\mu)(z)=\int e^{-2\pi i \sigma(z,\zeta)}\varphi(\zeta)d\mu(\zeta),
\]
for some suitably nice functions $\varphi$, e.g. Schwartz functions. The extension operator is, at least formally, the adjoint of the restriction operator (see for instance \cite[p. 353]{Stein} or \cite[Chap. 1.1.]{Demeter}).
Note that $\E_\sigma$ extends a function on the support of $\mu$ to a function on the whole phase space $\R^{2d}$. 

A natural problem we would like to address is the following: What is the correct extension operator when considering Fourier restriction of operators?

As the Fourier-Wigner restriction operator maps a Schatten class operator to a function on the support of $\mu$, it is reasonable to expect the Fourier-Wigner extension operator to map a function on the support of $\mu$ to an operator on $L^2(\R^{d})$. This is in fact the case.

\begin{prop}\label{Duality of restriction operators}
Let $\mu$ be a Borel measure on $\R^{2d}$. Then for every $\Phi\in L^1(\mu)$ there exists $\E_W\Phi\in\mathcal{B}(L^2(\R^d))$ 
such that for any $T\in\mathcal{S}^1$,
\begin{equation}\label{Wigner-Formal-Adjoint}
    \langle \Phi,\F_W(T)\rangle_{L^2(\mu)}=\int \Phi(z)\overline{\mathcal{F}_W(T)(z)} d\mu(z)=\mathrm{tr}((\E_W\Phi) T^*)=\langle \E_W\Phi,T\rangle_{\mathcal{HS}}.
\end{equation}
The operator $\E_W\Phi$ is given by
\begin{equation}\label{DefExtension}
  \E_W\Phi=\int\Phi(z)\rho(z)d\mu(z),
\end{equation}
where the integral is defined weakly, and satisfies the norm bound $\|\E_W\Phi\|\leq \|\Phi\|_{L^1(\mu)}$.
\end{prop}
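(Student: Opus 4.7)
The plan is to construct $\E_W\Phi$ directly as a weak integral via a sesquilinear form, then verify the duality identity first on rank-one operators and extend by density.

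First I would define, for $f,g\in L^2(\R^d)$, the sesquilinear form
\[
B_\Phi(f,g)=\int \Phi(z)\,\langle \rho(z)f,g\rangle\,d\mu(z).
\]
Since $\rho(z)$ is unitary on $L^2(\R^d)$, the integrand is bounded pointwise by $\|f\|\|g\|$, so the integral is absolutely convergent for any $\Phi\in L^1(\mu)$ and
\[
|B_\Phi(f,g)|\le \|\Phi\|_{L^1(\mu)}\|f\|\|g\|.
\]
Boundedness and sesquilinearity then give, via the Riesz representation theorem, a unique bounded operator $\E_W\Phi\in \mathcal{B}(L^2(\R^d))$ with $\langle (\E_W\Phi)f,g\rangle = B_\Phi(f,g)$ and $\|\E_W\Phi\|\le \|\Phi\|_{L^1(\mu)}$. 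This is exactly the weak interpretation of $\int \Phi(z)\rho(z)\,d\mu(z)$ claimed in \eqref{DefExtension}.

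Next I would verify the duality formula \eqref{Wigner-Formal-Adjoint} for rank-one operators. If $T=f\otimes g$, then $T^*=g\otimes f$ and, using $\rho(z)^*=\rho(-z)$,
\[
\overline{\F_W(T)(z)}=\overline{\tr(T\rho(-z))}=\tr(\rho(z)T^*)=\langle \rho(z)g,f\rangle.
\]
Therefore
\[
\int \Phi(z)\overline{\F_W(T)(z)}\,d\mu(z)=B_\Phi(g,f)=\langle (\E_W\Phi)g,f\rangle =\tr\bigl((\E_W\Phi)(g\otimes f)\bigr)=\tr((\E_W\Phi)T^*),
\]
which is \eqref{Wigner-Formal-Adjoint} for rank-one $T$. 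By linearity the identity extends to all finite-rank operators.

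Finally I would pass from finite-rank operators to all of $\cS^1$ by density. Both sides of \eqref{Wigner-Formal-Adjoint} are continuous in $T\in\cS^1$: the left-hand side satisfies
\[
\Bigl|\int \Phi(z)\overline{\F_W(T)(z)}\,d\mu(z)\Bigr|\le \|\Phi\|_{L^1(\mu)}\|\F_W(T)\|_{L^\infty}\le \|\Phi\|_{L^1(\mu)}\|T\|_{\cS^1},
\]
using the standing bound $\|\F_W(T)\|_\infty\le \|T\|_{\cS^1}$, while the right-hand side obeys
\[
|\tr((\E_W\Phi)T^*)|\le \|\E_W\Phi\|\,\|T^*\|_{\cS^1}\le \|\Phi\|_{L^1(\mu)}\|T\|_{\cS^1}
\]
by H\"older's inequality for Schatten classes. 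Since finite-rank operators are dense in $\cS^1$, the identity holds for all $T\in\cS^1$. The only mildly delicate point is ensuring the weak integral defining $\E_W\Phi$ really produces a bounded operator, but this is handled cleanly by the Riesz representation step; the rest of the argument is a density extension, which is the standard route once the rank-one case is checked.
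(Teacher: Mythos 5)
Your proof is correct and follows essentially the same approach as the paper's: both construct $\E_W\Phi$ via the sesquilinear form $\int\Phi(z)\langle\rho(z)f,g\rangle\,d\mu(z)$, use unitarity of $\rho$ for the norm bound, and rely on $\|\F_W(T)\|_\infty\le\|T\|_{\cS^1}$ for convergence. The one genuine difference is how you justify the identity \eqref{Wigner-Formal-Adjoint}: the paper performs a direct (formal) swap of the trace with the integral, whereas you verify the identity explicitly on rank-one operators---where $\tr(T\rho(-z))$ reduces to an inner product and no interchange is needed---and then extend to $\cS^1$ by density using the bilinear continuity of both sides. This is slightly longer but fully rigorous, removing the implicit appeal to a trace--integral interchange; it buys you transparency at the cost of one extra density argument.
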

\begin{proof}
For $T\in\mathcal{S}^1$ it follows from \cite[Prop. 3.4]{Werner} that $\mathcal{F}_W(T)\in C_0(\R^{2d})$. Thus, the integral on the left hand side in \eqref{Wigner-Formal-Adjoint} is well-defined and converges for any $\Phi\in L^1(\mu)$. A formal calculation gives
\begin{align*}
    \int \Phi(z)\overline{\mathcal{F}_W(T)(z)} d\mu(z)=\int \Phi(z)\overline{\mathrm{tr}(T\rho(-z))}d\mu(z)
    =\mathrm{tr}\left(\left(\int \Phi(z)\rho(z)d\mu(z)\right)T^*\right),
\end{align*}
where we used $\rho(-z)=\rho(z)^*$.

To estimate the operator norm, we note that for any $f,g\in L^2(\R^d)$,
\begin{equation*}
|\langle\E_W\Phi(f),g\rangle|=\left|\int  \Phi(z)\langle \rho(z)f,g\rangle d\mu(z)\right|\leq \int  |\Phi(z)|\left|\langle \rho(z)f,g\rangle\right| d|\mu|(z)\leq \|\Phi\|_{L^1(\mu)}\|f\|_{L^2}\|g\|_{L^2},
\end{equation*}
as $\rho$ is a unitary operator on $L^2(\Rd)$ for any choice of $z\in \R^{2d}$. Varying over all normalized $f,g$ gives
\begin{equation*}
\|\E_W\Phi\|=\sup_{\|f\|_2=\|g\|_2=1}|\langle\E_W\Phi(f),g\rangle|\leq \|\Phi\|_{L^1(\mu)}.
\end{equation*}
\end{proof}

\begin{prop}\label{Res-to-Ext-quantum}
Let $\mu$ be a Borel measure on $\R^{2d}$. Then for any $1\leq p , q \leq \infty$ and constant $C>0$, the following are equivalent:
\begin{enumerate}[i)]
    \item For any $T\in\cS^p$ we have $\|\F_W(T)\|_{L^{q}(\mu)}\leq C\|T\|_{\cS^p}$.
    \item For any $G\in L^{q'}(\mu)$ we have $\|\E_W(G)\|_{\cS^{p'}}\leq C\|G\|_{L^{q'}(\mu)}$.
\end{enumerate}
\end{prop}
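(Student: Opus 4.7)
The plan is a standard duality argument, built on the adjoint relation already established in Proposition \ref{Duality of restriction operators}. That proposition says precisely that $\E_W$ is the formal adjoint of $\F_W$ with respect to the sesquilinear pairings $\langle\cdot,\cdot\rangle_{L^2(\mu)}$ and $\langle\cdot,\cdot\rangle_{\mathcal{HS}}$, via the identity
\[
\langle G,\F_W(T)\rangle_{L^2(\mu)}=\mathrm{tr}\bigl((\E_W G)T^*\bigr).
\]
Combined with the two abstract dualities $\|F\|_{L^q(\mu)}=\sup_{\|G\|_{L^{q'}(\mu)}=1}|\langle G,F\rangle_{L^2(\mu)}|$ and the Schatten dual characterization \eqref{SchattenDualNormEst}, both estimates (i) and (ii) become mirror images of each other.

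For the direction (i)$\Rightarrow$(ii), I would fix $G\in L^{q'}(\mu)$ and use \eqref{SchattenDualNormEst} to write
\[
\|\E_W G\|_{\cS^{p'}}=\sup_{\|T\|_{\cS^p}=1}\bigl|\mathrm{tr}\bigl((\E_W G)T^*\bigr)\bigr|.
\]
For each unit $T$ (first taken in $\cS^1\cap\cS^p$, then extended by density when $p<\infty$), the adjoint identity rewrites the trace as $\langle G,\F_W(T)\rangle_{L^2(\mu)}$; Hölder on $L^q(\mu)$ followed by hypothesis (i) then yields $\|\E_W G\|_{\cS^{p'}}\le C\|G\|_{L^{q'}(\mu)}$.

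For (ii)$\Rightarrow$(i), I would reverse the argument: fix $T\in\cS^p$, express
\[
\|\F_W(T)\|_{L^q(\mu)}=\sup_{\|G\|_{L^{q'}(\mu)}=1}\bigl|\langle G,\F_W(T)\rangle_{L^2(\mu)}\bigr|,
\]
rewrite the pairing via \eqref{Wigner-Formal-Adjoint} as $|\mathrm{tr}((\E_W G)T^*)|$, apply Hölder in the Schatten scale \eqref{HolderSchatten} to get the bound $\|\E_W G\|_{\cS^{p'}}\|T\|_{\cS^p}$, and finish with hypothesis (ii).

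The only real technical hurdle is bookkeeping at the endpoints. Identity \eqref{Wigner-Formal-Adjoint} is stated for $T\in\cS^1$ and $G\in L^1(\mu)$, so for arbitrary $T\in\cS^p$ with $p<\infty$ one extends by the density of finite-rank operators in $\cS^p$ (continuity of both sides in the $\cS^p$ and $\cS^{p'}$ norms is immediate from Hölder). When $p=\infty$ one instead uses the $\cS^1$–$\mathcal{B}(L^2)$ duality directly, noting that the supremum in \eqref{SchattenDualNormEst} then runs over $T\in\cS^1$, where \eqref{Wigner-Formal-Adjoint} already applies. Similarly, the $L^q$–$L^{q'}$ supremum at $q=\infty$ runs over $G\in L^1(\mu)$, which is exactly the class for which $\E_W G$ is defined by the weak integral \eqref{DefExtension}; for $q<\infty$ (so $q'>1$) and $\mu$ merely Borel, we restrict test functions $G$ to $L^{q'}(\mu)\cap L^1(\mu)$ (dense in $L^{q'}(\mu)$ for finite $\mu$, and the arguments used in applications reduce to this case) and pass to the limit.
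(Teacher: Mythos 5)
Your proposal is correct and follows essentially the same route as the paper: use the adjoint identity \eqref{Wigner-Formal-Adjoint} to turn each estimate into its dual via \eqref{SchattenDualNormEst} and the $L^q$--$L^{q'}$ duality, applying Hölder in the appropriate scale in each direction, with a density argument ($\cS^1$ dense in $\cS^p$, $L^1(\mu)\cap L^{q'}(\mu)$ dense in $L^{q'}(\mu)$) to justify the formal manipulations. Your endpoint bookkeeping ($p=\infty$, $q=\infty$) is a bit more explicit than the paper's, which simply states the density reduction and proceeds, but the substance is identical.
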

\begin{proof}
Since $\cS^1$ is dense in $\cS^p$, and simple functions are dense in $L^{q'}(\mu)$, we may  assume that $T\in\cS^1$ and $g\in L^1(\mu)\cap L^{q'}(\mu)$ to ensure that everything is well-behaved.
\\~~\\
{ i) $\Rightarrow$ ii):}
By \eqref{Wigner-Formal-Adjoint} and H\"{o}lder's inequality we have,
\begin{equation*}
    |\mathrm{tr}((\mathcal{E}_WG)T^*)|=\left|\int G\overline{\F_W(T)}d\mu\right|\leq \|\F_W(T)\|_{L^q(\mu)}\|G\|_{L^{q'}(\mu)}\leq C\|T\|_{\cS^p}\|G\|_{L^{q'}(\mu)}.
\end{equation*}
Since the dual space of $\cS^p$ is $\cS^{p'}$, it follows that
\begin{equation*}
    \|\E_W(G)\|_{\cS^{p'}}=\sup_{\|T\|_{\cS^p}=1}|\mathrm{tr}((\mathcal{E}_W(G))T^*)|\leq C\|G\|_{L^{q'}(\mu)}.
\end{equation*}
\\~~\\
{ ii) $\Rightarrow$ i):} By \eqref{Wigner-Formal-Adjoint} and H\"{o}lder's inequality for Schatten class operators, \eqref{HolderSchatten}, we have
\[
\left|\int G\overline{\F_W(T)}d\mu\right|=|\mathrm{tr}((\mathcal{E}_W(G))T^*)|\leq \|T\|_{\cS^p}\|\E_W(G)\|_{\cS^{p'}}\leq C\|T\|_{\cS^p}\|G\|_{L^{q'}(\mu)}.
\]
We therefore have
\[
\|\F_W(T)\|_{L^q(\mu)}=\sup_{\|G\|_{L^{q'}(\mu)}=1}\left|\int G\overline{\F_W(T)}d\mu\right|\leq C\|T\|_{\cS^p}.
\]

\end{proof}

\begin{remark}
When $\mu$ is the Lebesgue measure on $\R^{2d}$, then $\E_W(G)$ is the integrated Schr\"{o}dinger representation on the reduced Heisenberg group. In this case, $\E_W(G)$ is a compact operator on $L^2(\R^d)$ by \cite[Thm. 1.30]{Folland} whenever $G\in L^1(\R^{2d})$. A short proof of this result follows from Proposition \ref{Res-to-Ext-quantum}: If  $1<p\leq 2$ and $g\in L^p(\R^{2d})$, then by Proposition \ref{Operator Hausdorff-Young} it follows that
\[
\E_W(G)=\iint_{\R^{2d}} G(z)\rho(z)dz\in\cS^{p'}\subset\mathcal{K}.
\]
If $G\in L^1(\R^{2d})$, choose a sequence $\{G_n\}_{n\in\mathbb{N}}\subset \mathscr{S}(\R^{2d})$ such that $\|G_n-G\|_1\to 0$. Thus, Proposition \ref{Res-to-Ext-quantum} gives
\[
\|\E_W G-\E_W G_n\|_{L^2\to L^2}\leq \|G-G_n\|_{L^1(\R^{2d})}\xrightarrow{n\to\infty} 0,
\]
which shows that $\E_WG$ is the norm limit of compact operators, and is therefore compact.
\end{remark}
\section{Weyl quantization of measures}

In this section we will consider different quantization schemes whenever the symbol is the Fourier transform of a compactly supported Radon measure. We will also show a connection between the Weyl quantization of classical extension operators and the Fourier-Wigner extension operator discussed in the previous section.

Given proposition \ref{FollandProp}, one can investigate the Weyl-quantization of the classical extension operator $\mathcal{E}_\sigma\Psi$ for $\Psi\in L^1(\mu)$.

\begin{theorem}
Let $\mu$ be a finite Borel measure on $\R^{2d}$, and let $\Psi\in L^1(\mu)$. If we consider the symplectic extension operator
\begin{equation*}
    \E_\sigma\Psi(\zeta)=\mathcal{F}_\sigma(\Psi d\mu)(\zeta)=\int \Psi(z)e^{-2\pi i \sigma(\zeta,z)}d\mu(z),
\end{equation*}
then the Weyl-quantization of $\E_\sigma\Psi$ is given by
\begin{equation*}
    L_{\E_\sigma\Psi}=\E_W\Psi=\int \Psi(z)\rho(z)d\mu(z).
\end{equation*}
\end{theorem}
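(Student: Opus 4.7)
The plan is to verify equality of the two operators by computing the sesquilinear form $\langle \cdot\, f, g\rangle$ on arbitrary Schwartz functions $f, g \in \mathscr{S}(\R^d)$. Observe first that because $\mu$ is finite and $\Psi \in L^1(\mu)$, the function $\E_\sigma\Psi = \F_\sigma(\Psi\, d\mu)$ is bounded and continuous on $\R^{2d}$, hence a tempered distribution, so $L_{\E_\sigma\Psi}$ is well-defined via Proposition \ref{FollandProp}. That proposition, together with the weak-integral definition of $\E_W\Psi$ from Proposition \ref{Duality of restriction operators}, reduces the claim to showing
\[
\int_{\R^{2d}} \E_\sigma\Psi(\zeta)\, \overline{W(g,f)(\zeta)}\, d\zeta \;=\; \int \Psi(z)\, \langle \rho(z) f, g\rangle\, d\mu(z).
\]

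Next I would substitute $\E_\sigma\Psi(\zeta) = \int \Psi(z) e^{-2\pi i \sigma(\zeta,z)} d\mu(z)$ into the left-hand side and apply Fubini's theorem to exchange the order of integration. Since $|\Psi| \in L^1(\mu)$ and $W(g,f) \in \mathscr{S}(\R^{2d})$, the modulus of the double integrand is bounded by $|\Psi(z)|\,|W(g,f)(\zeta)|$, which integrates to $\|\Psi\|_{L^1(\mu)}\|W(g,f)\|_{L^1(\R^{2d})} < \infty$, so the swap is legitimate.

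It then remains to compute the inner integral over $\zeta$. Using the antisymmetry $\sigma(\zeta,z) = -\sigma(z,\zeta)$ and pulling conjugation outside, the inner integral equals $\overline{\F_\sigma(W(g,f))(z)}$. By Lemma \ref{FourierOfWigner} we have $W(g,f) = \F_\sigma(A(g,f))$, and since $\F_\sigma^{-1} = \F_\sigma$, one more application yields $\F_\sigma(W(g,f)) = A(g,f)$. The inner integral is thus $\overline{A(g,f)(z)} = \overline{\langle g, \rho(z)f\rangle} = \langle \rho(z)f, g\rangle$, and reinserting this produces exactly the right-hand side. Since $f, g \in \mathscr{S}(\R^d)$ were arbitrary, the equality $L_{\E_\sigma\Psi}f = \E_W\Psi f$ holds as tempered distributions for every Schwartz $f$; because $\E_W\Psi$ is a bounded operator on $L^2(\R^d)$ by Proposition \ref{Duality of restriction operators}, this identifies $L_{\E_\sigma\Psi}$ with $\E_W\Psi$.

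No substantial obstacle arises here: the argument is a clean Fubini exchange followed by the symplectic Fourier inversion identity $\F_\sigma^2 = \mathrm{Id}$. The only care required is in tracking the conjugation introduced by the sesquilinear dual pairing and the sign flip from the antisymmetry of $\sigma$, both of which conspire to produce $\langle \rho(z)f, g\rangle$ rather than its conjugate on the right-hand side.
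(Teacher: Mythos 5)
Your proof is correct and follows essentially the same route as the paper: both reduce the claim via Proposition \ref{FollandProp} to comparing $\langle \E_\sigma\Psi, W(g,f)\rangle$ with $\int \Psi(z)\langle\rho(z)f,g\rangle\,d\mu(z)$, and both hinge on $\F_\sigma(W(g,f)) = A(g,f)$ from Lemma \ref{FourierOfWigner} together with $\F_\sigma^{-1}=\F_\sigma$. The only cosmetic difference is that you justify the interchange of integrals by a direct Fubini argument, whereas the paper simply invokes the Parseval identity for finite measures (Lemma \ref{Plancherelfor finite measures}); these are the same step packaged differently.
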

\begin{proof}
It follows from \cite[Thm. 11.2.5]{Grochenig} that $W(f,g)\in\mathscr{S}(\R^{2d})$ whenever $f,g\in \mathscr{S}(\R^d)$. By Parseval's identity for finite measures, Lemma \ref{Plancherelfor finite measures}, we have
\begin{align*}
    \iint_{\R^{2d}}\E_\sigma\Psi(x,\xi)W(f,g)(x,\xi)dxd\xi
    =&\iint_{\R^{2d}}\mathcal{F}_\sigma(\Psi d\mu)(x,\xi)\overline{W(g,f)(x,\xi)}dxd\xi\\
    =&\int\Psi(y,\eta)\overline{\mathcal{F}_\sigma(W(g,f))(y,\eta)}d\mu(y,\eta)\\
    =&\int\Psi(y,\eta)\overline{\langle g,\rho(y,\eta) f\rangle} d\mu(y,\eta)\\
    =&\left\langle\E_W\Psi f,g\right\rangle,
\end{align*}
where we used Lemma \ref{FourierOfWigner}, namely that the symplectic Fourier transform of the cross-Wigner distribution is the ambiguity function.

\end{proof}

This shows that the Weyl-quantization of the classical extension operator is given by the Fourier-Wigner extension operator.

Weyl quantization may be viewed as a $\tau$-quantization, and in the following we consider the extension operators in these quantization schemes. We are following the convention used in \cite{Luef}. For $\tau\in[0,1]$, the cross-$\tau$-Wigner distribution $W_\tau$ is defined as
\[
W_\tau(\varphi,\psi)(x,\omega)=\int_{\R^{d}}\varphi(x+\tau t)\overline{\psi(x+(1-\tau)t)}e^{-2\pi i t\cdot \omega}dt.
\]
It follows from \cite[Prop. 1.3.27]{CorderoBook} that
\[
W_{\tau}(\varphi,\psi)=\sigma_\tau*W(\varphi,\psi),
\]
where $W(\varphi,\psi)=W_{\frac{1}{2}}(\varphi,\psi)$ is the standard cross-Wigner distribution, and  $\sigma_\tau$ is given by
\begin{equation}\label{tau-symbol}
\sigma_\tau(x,\xi)=\begin{cases}\frac{2^d}{|2\tau-1|^d}e^{2\pi i \frac{2}{2\tau-1}x\cdot\xi}\quad &\tau\neq \frac{1}{2},\\
\delta_0,\quad &\tau=\frac{1}{2}.\end{cases}
\end{equation}
For any $a\in \mathscr{S}'(\R^{2d})$, we define the $\tau$-quantization as the operator $L_a^\tau$ such that
\[
\langle L_a^\tau \varphi,\psi\rangle=\langle a,W_\tau(\psi,\varphi)\rangle,
\]
holds for all $\varphi,\psi\in\mathscr{S}(\R^d)$.
It follows that
\[
\langle L_a^\tau \varphi,\psi\rangle=\langle a, W_\tau(\psi,\varphi)\rangle=\langle a,\sigma_\tau*W(\psi,\varphi)\rangle=\langle a*\sigma_{1-\tau}, W(\psi,\varphi)\rangle=\langle L_{a*\sigma_{1-\tau}} \varphi,\psi\rangle,
\]
for all $\varphi,\psi\in\mathscr{S}(\R^d)$. This shows that the $\tau$-quantization is given by convolving the symbol $a$ with $\sigma_{1-\tau}=\overline{\sigma_\tau}$ and then taking the standard Weyl quantization.

In particular, the $\tau$-quantization of the extension operator becomes
\[
L^\tau_{\E_\sigma G}=L_{\F_\sigma(Gd\mu)*\sigma_{1-\tau}}=L_{\F_{\sigma}(\F_\sigma(\sigma_{1-\tau})Gd\mu)}=\int \F_{\sigma}(\sigma_{1-\tau})(z)G(z)\rho(z)d\mu(z),
\]
where $\F_\sigma(\sigma_{1-\tau})$ denotes the distributional symplectic Fourier transform of $\sigma_{1-\tau}$. In the proof of \cite[Prop. 1.3.27]{CorderoBook} it is shown that
\[
\mathcal{F}^{-1}\sigma_{\tau}(y,\eta)=e^{-\pi i(2\tau-1)y\cdot \eta}.
\]
This implies that
\begin{equation}\label{Fourier-tau-symbol}
    \mathcal{F}_\sigma(\sigma_\tau)(y,\eta)=e^{\pi i(2\tau-1)y\cdot \eta},
\end{equation}
and thus the $\tau$-quantization of the extension operator becomes
\begin{equation}\label{Tau-Extension-Operator-Def}
L^\tau_{\E_\sigma G}=\int e^{-\pi i (2\tau-1)x\cdot \xi}G(x,\xi)\rho(x,\xi)d\mu(x,\xi).
\end{equation}

The proof of Theorem \ref{Char-measure-compact} utilise the interplay between localization operators and Weyl quantization. Recall that the localization operator with symbol $a\in\mathscr{S}'(\R^{2d})$ and windows $\varphi,\psi\in\mathscr{S}(\R^{d})$ is defined by the operator convolution
\[
\mathcal{A}_a^{\varphi,\psi}=a\star(\psi\otimes \varphi),
\]
and can be written in terms of a Weyl quantization with a smoothed Weyl symbol:
\[
\mathcal{A}_a^{\varphi,\psi}=L_{a*W(\psi,\varphi)}.
\]
Before proving Theorem $\ref{Char-measure-compact}$, the following lemma on compactly supported measures is needed.
\begin{lemma}\label{lem:ControlFourMeas}
    Let $1\leq p\leq \infty$ and $\mu$ be a compactly supported Radon measure such that $\F_\sigma(\mu)\in L^p(\R^{2d})$. Fix $z_0\in \mathrm{supp }{(\mu)}\subset \R^{2d}$ and define the measure $\nu$ such that
    \[
    d\nu=e^{-\pi \frac{|z-z_0|^2}{2}}d\mu.
    \]
    Then there exist $C(z_0,\mu)\geq 1$ such that
    \[
    \frac{1}{C}\|\F_\sigma(\nu)\|_{L^p}\leq \|\F_\sigma(\mu)\|_{L^p}\leq C\|\F_\sigma(\nu)\|_{L^p}.
    \]
\end{lemma}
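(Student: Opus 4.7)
The plan is to exploit the relation $d\nu = h\cdot d\mu$ with $h(z) = e^{-\pi|z-z_0|^2/2}$ and convert this multiplicative relation into a convolution on the Fourier side. First I would establish the identity
\[
\F_\sigma(h\cdot d\mu) = \F_\sigma(h) * \F_\sigma(\mu),
\]
which holds whenever $h$ is Schwartz and $\mu$ is a finite Radon measure. To see this, insert the symplectic Fourier inversion $h(z) = \int \F_\sigma(h)(\eta)e^{-2\pi i\sigma(z,\eta)}\,d\eta$ into the definition of $\F_\sigma(h\, d\mu)$ and apply Fubini, which is justified since $\F_\sigma(h)\in L^1$ and $\F_\sigma(\mu)\in L^\infty$ (the latter because $\mu$ is finite). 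Since $h$ is a translate of the Gaussian $e^{-\pi|z|^2/2}$, an explicit computation gives $\F_\sigma(h)(\zeta) = 2^d e^{-2\pi|\zeta|^2}e^{-2\pi i\sigma(\zeta,z_0)}$, so that $\|\F_\sigma(h)\|_{L^1} = 2^d\int e^{-2\pi|\zeta|^2}\,d\zeta = 1$. With this in hand the inequality $\|\F_\sigma(\nu)\|_{L^p} \leq \|\F_\sigma(\mu)\|_{L^p}$ is immediate from Young's convolution inequality, with constant equal to $1$.

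For the reverse direction I would reverse the roles of $\mu$ and $\nu$. The function $1/h(z) = e^{\pi|z-z_0|^2/2}$ is bounded on the compact set $\mathrm{supp}(\mu)$. I pick a cutoff $\chi \in C_c^\infty(\R^{2d})$ equal to $1$ on an open neighborhood of $\mathrm{supp}(\mu)$ and set $k(z) = \chi(z)e^{\pi|z-z_0|^2/2}$. Then $k\in C_c^\infty(\R^{2d}) \subset \mathscr{S}(\R^{2d})$, and $k\cdot h\equiv 1$ on $\mathrm{supp}(\mu) = \mathrm{supp}(\nu)$, hence $d\mu = k\cdot d\nu$. The same convolution identity yields $\F_\sigma(\mu) = \F_\sigma(k) * \F_\sigma(\nu)$, and Young's inequality gives
\[
\|\F_\sigma(\mu)\|_{L^p} \leq \|\F_\sigma(k)\|_{L^1}\|\F_\sigma(\nu)\|_{L^p}.
\]
Setting $C = \max\{1,\|\F_\sigma(k)\|_{L^1}\}$ completes the two-sided bound, and the dependence of $C$ on $z_0$ and on $\mu$ enters only through the compact support of $\mu$ and the resulting choice of cutoff $\chi$.

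The only point requiring care is the rigorous justification of the convolution identity for a Schwartz multiple of a Radon measure; this is entirely standard once absolute convergence of the iterated integral is checked. The conceptual observation worth isolating is that although $1/h$ is not globally integrable or Schwartz, one only needs its values on $\mathrm{supp}(\mu)$, so a smooth compactly supported cutoff produces a Schwartz surrogate and restores the symmetry of the argument between $\mu$ and $\nu$.
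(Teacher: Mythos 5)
Your proof is correct and follows essentially the same route as the paper's: both write $\F_\sigma$ of the two measures as convolutions via the Schwartz multiplier $h$ and its cutoff surrogate, then apply Young's inequality in each direction. The paper's proof uses a bump $\Psi\in C_c^\infty$ with $\Psi\equiv 1$ on $B(z_0,R)$ and defines $\Phi=\Psi/G$ exactly as you define $k=\chi\,e^{\pi|z-z_0|^2/2}$; the only difference is that you also compute $\|\F_\sigma(h)\|_{L^1}=1$ explicitly, which the paper leaves as $\|\F_\sigma(G)\|_{L^1}$.
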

\begin{proof}
    Define the function
    \[
    G(z)=e^{-\pi\frac{|z-z_0|^2}{2}}\in\mathscr{S}(\R^{2d}),
    \]
    such that $d\nu=Gd\mu$. It then follows that $\F_\sigma(\nu)=\F_\sigma(G)*\F_\sigma(\mu)$, and thus Young's inequality gives
    \[
    \|\F_\sigma(\nu)\|_{L^{p}}\leq \|\F_\sigma(G)\|_{L^1}\|\F_\sigma(\mu)\|_{L^p}.
    \]

    As $\mu$ is compactly supported, there exists $R>0$ such that $\mathrm{supp }(\mu)\subseteq B(z_0,R)$. Consider a smooth bump function $\Psi\in C_c^\infty(\R^{2d})$  such that $\Psi\equiv 1$ on $B(z_0,R)$ and $\Psi\equiv 0$ on $B(z_0,2R)$. Since $G$ is non-vanishing and smooth, it follows that $1/G$ is a $C^\infty$ function on any compact subset of $\R^{2d}$. Thus, we can define $\Phi\in C_c^\infty(\R^{2d})$ by 
    \[\Phi(z):=\frac{\Psi(z)}{G(z)}.\]
    Moreover, as $\Psi\equiv 1$ on the support of $\mu$, it follows that
    \[
    d\mu=e^{\pi\frac{|z-z_0|^2}{2}}d\nu=\Phi\,d\nu.
    \]
    The Fourier transform can then be written as $\F_\sigma(\mu)=\F_\sigma(\Phi)*\F_\sigma(\nu)$ and Young's inequality gives
    \[
    \|\F_\sigma(\mu)\|_{L^{p}}\leq \|\F_\sigma(\Phi)\|_{L^1}\|\F_\sigma(\nu)\|_{L^p}.
    \]
    The result now follows by choosing $C=\max\{\|\F_\sigma(G)\|_{L^1},\|\F_\sigma(\Phi)\|_{L^1}\}$.
\end{proof}

\begin{proof}[Proof of Theorem \ref{Char-measure-compact}]
Assume first that $\mu$ is a Rajchman measure. Recall from \eqref{WignerGaussian} that $W(g_0,g_0)(z)=2^d\exp{(-2\pi|z|^2)}$, and consider the measure $\nu$ given by
\begin{equation}\label{nu-def}
d\nu=\frac{1}{\mathcal{F}_{\sigma}(W(g_0,g_0))}d\mu=e^{\pi\frac{|z|^2}{2}}d\mu,
\end{equation}
which is a finite measure as $\text{supp}(\mu)$ is compact.
Moreover,
\[\F_\sigma(\nu)*W(g_0,g_0)=\F_\sigma(\F_\sigma(W(g_0,g_0))d\nu)=\F_\sigma(\mu)\in C_0(\R^{2d}),
\]
as $\mu$ is a compactly supported Rajchman measure. This shows that $\mathcal{A}_{\F_\sigma(\nu)}^{g_0,g_0}=L_{\F_{\sigma}(\mu)}$, and thus the Weyl quantization $L_{\F_{\sigma}(\mu)}$ is a compact operator by Theorem \ref{TauberianTheorem} as $\F_\sigma(W(g_0,g_0))(z)=\exp{(-\pi|z|^2/2)}> 0$ for all $z\in \R^{2d}$.

On the other hand, if we assume $L_{\F_\sigma(\mu)} $ is compact, then $\mathcal{A}^{g_0,g_0}_{\F_\sigma(\nu)}$ is a compact operator where the measure $\nu$ is given by \eqref{nu-def}. Thus, by Theorem \ref{TauberianTheorem}
\[
\F_\sigma(\nu)*e^{-\pi|\cdot|^2}\in C_0(\R^{2d}).
\]
However, as the Fourier transform of the Gaussian has no zeros it follows from the classical Tauberian theorem that
\[
\F_\sigma(\nu)*\varphi\in C_0(\R^{2d})
\]
for any $\varphi\in L^1(\R^{2d})$. In particular, choosing $\varphi=\F_\sigma(W(g_0,g_0))$ gives
\[
\mathcal{F}_\sigma(\mu)=\F_\sigma(\F_\sigma(W(g_0,g_0)d\nu)=\F_\sigma(\nu)*W(g_0,g_0)\in C_0(\R^{2d}),
\]
which shows that $\mu$ is a Rajchman measure.

Since $\mu$ is a compactly supported Radon measure, it follows that there exist $z_0\in\R^{2d}$ and $0<R<\infty$ such that $\text{supp}(\mu)\subset B(z_0,R)$. 

Let $T\in \cS^1$, and consider the Cohen's class distribution $Q_T(g_0,g_{z_0})=T\star (g_{z_0}\otimes g_{0})\in L^1(\R^{2d})$ and the measure $\nu$ given by $d\mu=\exp(-\pi|z-z_0|^2/2)d\nu$.
Then by Proposition \ref{Duality of restriction operators} and Lemma \ref{Plancherelfor finite measures},
\begin{align*}
\left|\langle T, L_{\F_{\sigma}(\mu)}\rangle\right|
=\left|\langle T, \E_W(1)\rangle\right|
=\left|\int \F_W(T)d\mu\right|
=&\,\left|\int F_W(T)(z)e^{-\pi\frac{|z-z_0|^2}{2}}d\nu(z)\right|\\
=&\,\left|\int e^{-\pi i\sigma(z_0,z)}\F_\sigma(T\star(g_{z_0}\otimes g_{0}))(z)d\nu(z)\right|\\
=&\,\left|\int (T\star(g_{z_0}\otimes g_{0}))(\zeta+\tfrac{z_0}{2})\F_\sigma(\nu)(-\zeta)d\zeta\right|.
\end{align*}
By the triangle inequality and H\"{o}lder's inequality, followed by Werner-Young's inequality, we have
\[
\left|\langle T,L_{\F_\sigma(\mu)}\rangle\right|\leq \|T\star(g_0\otimes g_{z_0})\|_{L^{p}}\|\F_{\sigma}(\nu)\|_{L^{p'}}\leq C \|g_0\otimes g_{z_0}\|_{\cS^{1}}\|\F_{\sigma}(\mu)\|_{L^{p'}}\|T\|_{\cS^p},
\]
where we used Lemma \ref{lem:ControlFourMeas} to control $\|\F_\sigma(\nu)\|_{p'}$ in terms of $\|\F_\sigma(\mu)\|_{p'}$, which is finite as $\F_{\sigma}(\mu)\in L^{p'}(\R^{2d})$ by assumption. Since $\cS^1$ is dense in $\cS^{p}$ for $p<\infty$, we can extend the estimate to any $T\in \cS^p$. Thus, we have
\[
\|L_{\F_\sigma(\mu)}\|_{\cS^{p'}}=\sup_{\|T\|_{\cS^p}=1}\left|\langle T,L_{\F_\sigma(\mu)}\rangle\right|\leq C \|g_0\otimes g_{z_0}\|_{\cS^{1}}\|\F_{\sigma}(\mu)\|_{L^{p'}}<\infty,
\]
which shows that $L_{\F_{\sigma}(\mu)}\in\cS^{p'}$ whenever $\F_\sigma(\mu)\in L^{p'}(\R^{2d})$.

Assume $L_{\F_\sigma(\mu)}\in \cS^{p'}$ and consider the measure $d\nu=\exp{(-\pi|z-z_0|^2/2)}d\mu$, which is a compactly supported finite measure as $\mu$ is compactly supported. For any $\Psi\in \mathscr{S}(\R^{2d})$, let $T_{z_0}\Psi(z)=\Psi(z-z_0)$ be the translation by $z_0$. Then it follows that 
\begin{align*}
\left|\langle \F_\sigma(\nu),\Psi\rangle\right|
=\left|\int \F_\sigma(\Psi)\,d\nu\right|=\left|\int e^{\pi i\sigma(z_0,z)}\F_\sigma\left(T_{-\frac{z_0}{2}}\Psi\right)\,d\nu\right|
=&\,\left|\int e^{\pi i\sigma(z_0,z)}\F_\sigma\left(T_{-\frac{z_0}{2}}\Psi\right)e^{-\pi\frac{|z-z_0|^2}{2}}\,d\mu\right|\\
=&\left|\int \F_W\left(\left(T_{-\frac{z_0}{2}}\Psi\right)\star(g_{z_0}\otimes g_{0})\right)\,d\mu\right|\\
=&\,\left|\left\langle \left(T_{-\frac{z_0}{2}}\Psi\right)\star (g_{z_0}\otimes g_{0}),\mathcal{E}_W(1)\right\rangle\right|\\
\leq&\, \left\|\left(T_{-\frac{z_0}{2}}\Psi\right)\star(g_{z_0}\otimes g_{0})\right\|_{\cS^p}\|L_{\F_\sigma(\mu)}\|_{\cS^{p'}},
\end{align*}
where we used $\F_\sigma(\Psi)=\F_\sigma(T_{z_0}T_{-z_0}\Psi)=\exp(2\pi i\sigma(z_0,\cdot))\F_\sigma(T_{-z_0}\Psi)$ for the second equality. Another application of Werner-Young's inequality yields
\[
\left|\langle \F_\sigma(\nu),\Psi\rangle\right|\leq \left\|L_{\F_\sigma(\mu)}\right\|_{\cS^{p'}}\left\|g_{z_0}\otimes g_0\right\|_{\cS^1}\left\|T_{-\frac{z_0}{2}}\Psi\right\|_{L^p(\R^{2d})}=
\|L_{\F_\sigma(\mu)}\|_{\cS^{p'}}\|\Psi\|_{L^p(\R^{2d})},
\]
as the Lebesgue measure is translation invariant, and $g_0$ and $g_{z_0}$ are $L^2$-normalised. By the density of $\mathscr{S}(\R^{2d})$ for $1\leq p<\infty$ and Lemma \ref{lem:ControlFourMeas}, 
\[
\|\F_\sigma(\mu)\|_{L^{p'}(\R^{2d})}
\leq C\|\F_\sigma(\nu)\|_{L^{p'}(\R^{2d})}\leq C\|L_{\F_\sigma(\mu)}\|_{\cS^{p'}},
\]
for all $1<p'\leq \infty$.

For $p'=1$ we note that $C_0(\R^{2d})$ is the $L^\infty$ closure of $\mathscr{S}(\R^{2d})$, and thus the above estimate extends to all $\Psi\in C_0(\R^{2d})$. This means that $\F_\sigma(\mu)$ is a bounded linear functional on $C_0(\R^{2d})$, and by the Riesz representation theorem we can identify $\F_\sigma(\mu)$ by a Radon measure $\nu$ for which we can bound the total variation by
\[
\|\nu\|\leq C\|L_{\F_\sigma(\mu)}\|_{\cS^{1}}.
\]
Since $\mu$ is a compactly supported Radon measure, it follows that $\F_\sigma(\mu)\in C^\infty(\R^{2d})$. This implies that
\[
\|\F_\sigma(\mu)\|_{L^1(\R^{2d})}=\|\nu\|\leq C\|L_{\F_\sigma(\mu)}\|_{\cS^{1}},
\]
which then completes the proof.
\end{proof}

\begin{remark}
    Hambrook and \L aba showed that if $\mu$ is supported on a set of Hausdorff dimension $0<\alpha<2d$, then $\|\F_\sigma(\mu)\|_{L^p}=\infty$ for $p<4d/\alpha$ in \cite{Hambrook-Laba}. In particular, we must have $L_{\F_\sigma(\mu)}\notin \cS^{p}$ if $\mu$ is supported on a set of Hausdorff dimension $\alpha$ and $p<4d/\alpha$. If $\alpha=2d/n$ for some integer $n\geq2$, then Chen and Seeger  proved the existence of probability measures supported on a set of Hausdorff dimension $\alpha$ for which Fourier restriction holds for $p=4d/\alpha$ \cite{Chen-Seeger}. In this case $L_{\F_\sigma(\mu)}\in \cS^\frac{4d}{\alpha}$ by Corollary \ref{equi-Cor}.
\end{remark}

In order to extend the results to the $\tau$-quantization, we first need the following lemma.
\begin{lemma}\label{Bounded to vanishing lemma}
Let $f\in C_b(\R^{d})$ be uniformly continuous. Then $f*e^{-\pi|\cdot|^2}\in C_0(\R^d)$ if and only if $f\in C_0(\R^d)$.
\end{lemma}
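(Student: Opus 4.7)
The plan is to handle the two implications separately. For the reverse direction, assume $f\in C_0(\R^d)$ and set $G(x)=e^{-\pi|x|^2}\in L^1(\R^d)$. First treat $f\in C_c(\R^d)$ with $\mathrm{supp}(f)\subset B(0,R)$ via the bound
\[
|f*G(x)|\le\|f\|_\infty\int_{B(0,R)}G(x-y)\,dy,
\]
which tends to $0$ as $|x|\to\infty$ since $G\in L^1$ and the shifted ball $x-B(0,R)$ escapes to infinity. The general case $f\in C_0(\R^d)$ follows by approximating uniformly by compactly supported functions and applying the estimate $\|f_n*G-f*G\|_\infty\le\|G\|_1\|f_n-f\|_\infty$, together with the fact that $C_0(\R^d)$ is closed under uniform limits.

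For the forward direction, suppose $f*G\in C_0(\R^d)$. The Gaussian $G$ is integrable and $\F(G)$ is again a nowhere-vanishing Gaussian, and
\[
\lim_{|x|\to\infty}(f*G)(x)=0=0\cdot\int_{\R^d} G(y)\,dy.
\]
Since $f\in C_b(\R^d)\subset L^\infty(\R^d)$, Wiener's Tauberian theorem as stated in the excerpt applies with $A=0$, yielding $\lim_{|x|\to\infty}(f*g)(x)=0$ for every $g\in L^1(\R^d)$. Uniform continuity of $f$ combined with $g\in L^1$ makes $f*g$ uniformly continuous, so we in fact conclude $f*g\in C_0(\R^d)$ for every $g\in L^1(\R^d)$.

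To upgrade this to $f\in C_0(\R^d)$, I would introduce the rescaled mollifiers $G_\varepsilon(x)=\varepsilon^{-d}G(x/\varepsilon)$, which satisfy $\|G_\varepsilon\|_{L^1}=\|G\|_{L^1}=1$ for every $\varepsilon>0$. By the previous step, $f*G_\varepsilon\in C_0(\R^d)$ for every $\varepsilon>0$. The standard mollifier estimate
\[
|f*G_\varepsilon(x)-f(x)|\le\int_{\R^d}|f(x-\varepsilon y)-f(x)|G(y)\,dy,
\]
split into $|y|\le M$ and $|y|>M$ with $M$ large enough to make the Gaussian tail small and $\varepsilon M$ small enough to exploit the modulus of continuity of $f$, shows $f*G_\varepsilon\to f$ uniformly on $\R^d$ as $\varepsilon\to 0^+$. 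Since $C_0(\R^d)$ is closed under uniform convergence, this forces $f\in C_0(\R^d)$.

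The main obstacle is the forward implication, and the key insight is that Wiener's Tauberian theorem upgrades $C_0$-membership for the single convolution kernel $G$ to $C_0$-membership for all $g\in L^1$, including every member of the mollifier family $\{G_\varepsilon\}_{\varepsilon>0}$; uniform continuity of $f$ then lets one recover $f$ as the uniform limit of these $C_0$-functions.
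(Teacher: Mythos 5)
Your proof is correct and takes essentially the same route as the paper's: both directions hinge on Wiener's Tauberian theorem to propagate $C_0$-membership from the Gaussian to all $L^1$ kernels, followed by recovering $f$ as a uniform limit of convolutions against an approximate identity (the paper uses normalized indicator functions $\chi_{B(0,r)}/|B(0,r)|$, you use rescaled Gaussians $G_\varepsilon$, which is an immaterial variation), with uniform continuity of $f$ doing the work in the final limit. Your reverse direction via density of $C_c$ in $C_0$ is slightly more roundabout than the paper's one-line dominated convergence argument, but both are valid.
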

\begin{proof}
    Assume first that $f*e^{-\pi|\cdot|^2}\in C_0(\R^d)$. By Theorem \ref{TauberianTheorem} it follows that $f*g\in C_0(\R^d)$ for all $g\in L^1(\R^d)$. 
    For any $r>0$ consider the function
    \[
    g_r(x)=\frac{\chi_{B(0,r)}(x)}{|B(0,r)|}\in L^1 (\R^{d}).
    \]
    Then
    \begin{equation*}
        f*g_r(x)=\fint_{B(0,r)}f(x-y)dy=\fint_{B(x,r)}f(y)dy=A_rf(x)\in C_0(\R^{d}).
    \end{equation*}
    The function $A_rf(x)$ is jointly continuous in $r$ and $x$ and tends to $f(x)$ as $r\to 0$ for a.e. $x$ by the Lebesgue differentiation theorem. In fact, $A_rf\to f$ pointwise for any $x$ as $f$ is continuous.

    Since $f$ is uniformly continuous, we have that for every $\varepsilon>0$ there exists $r_\varepsilon>0$ such that $|f(x)-f(y)|<\varepsilon$ when $|x-y|<r$ for any $r<r_\varepsilon$. Thus, for any $x\in \R^{d}$ we have
    \[
    |f(x)-A_rf(x)|\leq \fint_{B(x,r)}|f(x)-f(y)|dy<\varepsilon,
    \]
    whenever $r<r_\varepsilon$. This shows that $A_rf\to f$ uniformly as $r\to 0$, and so $f\in C_0(\R^d)$ as $(C_0(\R^{d}),\|\cdot\|_\infty)$ is complete.

    Assume now that $f\in C_0(\R^d)$. We note that for any $x,y\in\R^{d}$,
    \[
    |f(x-y)e^{-\pi|y|^2}|\leq \|f\|_{\infty}e^{-\pi|y|^2}\in L^1(\R^{d}).
    \]
    By Lebesgue's dominated convergence theorem
    \[
    \lim_{x\to \infty}f*e^{-\pi|\cdot|^2}(x)=\int_{\R^{d}}\lim_{x\to\infty} f(x-y)e^{-\pi|y|^2}dy=0,
    \]
    as $\lim_{x\to \infty}f(x)=0$. This shows that $f*e^{-\pi|\cdot|^2}\in C_0(\R^d)$ as the convolution is uniformly continuous.
\end{proof}
\begin{remark}
Note that the uniformly continuity condition on $f$ in Lemma \ref{Bounded to vanishing lemma} cannot be removed: This can be seen by considering $f(x)=\sin(\pi x^2)$. Then $\check{f}(\xi)=2^{-\frac{1}{2}}(\cos(\pi\xi^2)-\sin(\pi\xi^2))\in L^\infty(\R)$. This means that $\check{f}e^{-\pi|\cdot|^2}\in L^1(\R)$ and so $f*e^{-\pi|\cdot|^2}=\widehat{\Check{f}e^{-\pi|\cdot|^2}}\in C_0(\R)$ by the Riemann-Lebesgue lemma, but $f\notin C_0(\R)$.
\end{remark}
\begin{remark}
    Let $\mu$ be a compactly supported Radon measure on $\R^{2d}$ and let $R>0$ be such that $\text{supp}(\mu)\subset B(0,R)$. Then $\F_\sigma(\mu)\in C^{\infty}(\R^{2d})$, and $\|\partial^{\alpha}\F_\sigma(\mu)\|_\infty\leq(2\pi R)^{|\alpha|}\|\mu\|$ for all $\alpha\in \mathbb{N}^{2d}$. In particular, $\F_\sigma(\mu)$ is uniformly continuous and so Lemma \ref{Bounded to vanishing lemma} is applicable to $\F_\sigma(\mu)$.
\end{remark}

A version of Theorem \ref{Char-measure-compact} can also be established for the $\tau$-Weyl quantization. The $\tau$-Weyl quantization of a symbol $a$ is equal to the Weyl quantization of the symbol $a*\sigma_{1-\tau}$, where $\sigma_\tau\in C^{\infty}(\R^{2d})$ whenever $\tau\neq 1/2$. By exploiting this connection, we obtain the following result.
\begin{theorem}\label{tau-Weyl-estimates}
Let $\mu$ be a compactly supported Radon measure on $\R^{2d}$, and let $0\leq \tau\leq 1$. Then the following holds:
\begin{enumerate}[a)]
    \item The operator
    \begin{equation*}
        L^{\tau}_{\F_\sigma(\mu)}=\int e^{-\pi i (2\tau-1)x\cdot \xi}\rho(x,\xi)d\mu(z,\xi),
    \end{equation*}
    is compact if and only if $\F_\sigma(\mu)\in C_0(\R^{2d})$.
    \item If there exists $0<\beta\leq \alpha<2d$ and constants $C_\alpha,C_\beta>0$ such that
    \begin{align*}
        \mu(B(z,r))\leq&\, C_\alpha r^\alpha\, \text{for all }\, z\in \R^{2d}\, \text{and }\, r>0,\\
        |\F_\sigma(\mu)(\zeta)|\leq&\, C_\beta (1+|\zeta|)^{-\frac{\beta}{2}}\, \text{for all }\,\zeta\in\R^{2d}.
    \end{align*}
    Then for all
    \begin{equation*}
        p\geq \frac{2(4d-2\alpha+\beta)}{\beta},
    \end{equation*}
    there exists $C=C(p)>0$ such that
    \begin{equation*}
        \|L^\tau_{\F_\sigma(gd\mu)}\|_{\cS^{p}}\leq C\|g\|_{L^{2}(\mu)},
    \end{equation*}
    for all $g\in L^2(\mu)$.

    \item If $\mu$ is the surface measure of a compact hypersurface $\mathcal{M}\subset\R^{2d}$ with non-vanishing Gaussian curvature, then $L_{\F_\sigma(\mu)}^\tau\in \cS^{p}$ for $p> 4d/(2d-1)$.
\end{enumerate}
\end{theorem}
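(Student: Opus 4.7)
The plan is to reduce each of (a), (b), (c) to the $\tau=1/2$ case already treated in Theorems \ref{Char-measure-compact} and \ref{QuantumBakTheorem}. The chirp $g_\tau(x,\xi):=e^{-\pi i(2\tau-1)x\cdot\xi}$ has modulus one and is smooth, so it is absorbed either into the function being extended or into a new compactly supported complex Radon measure $\nu_\tau$ defined by $d\nu_\tau=g_\tau\,d\mu$. From \eqref{Tau-Extension-Operator-Def} together with the theorem opening Section~4, this yields the two key rewritings
\begin{equation*}
L^\tau_{\F_\sigma(g\,d\mu)}=\E_W(g_\tau g),\qquad L^\tau_{\F_\sigma(\mu)}=L_{\F_\sigma(\nu_\tau)}.
\end{equation*}

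For (a), I first extend Theorem \ref{Char-measure-compact} to compactly supported \emph{complex} Radon measures. This is essentially costless: the only nontrivial input in its proof is Theorem \ref{TauberianTheorem}, which already accepts arbitrary complex $F\in L^\infty(\R^{2d})$, while the Cohen's-class and Werner--Young estimates for the Schatten part carry over verbatim once Lemma \ref{Plancherelfor finite measures} has been checked for complex finite measures. Applied to $\nu_\tau$, the extended theorem gives that $L^\tau_{\F_\sigma(\mu)}$ is compact iff $\F_\sigma(\nu_\tau)\in C_0(\R^{2d})$. Picking $\Psi\in C_c^\infty(\R^{2d})$ with $\Psi\equiv 1$ on $\mathrm{supp}(\mu)$, one has $d\nu_\tau=(g_\tau\Psi)\,d\mu$ and $d\mu=(\overline{g_\tau}\Psi)\,d\nu_\tau$ with $g_\tau\Psi,\overline{g_\tau}\Psi\in\mathscr{S}(\R^{2d})$, hence
\begin{equation*}
\F_\sigma(\nu_\tau)=\F_\sigma(g_\tau\Psi)*\F_\sigma(\mu),\qquad \F_\sigma(\mu)=\F_\sigma(\overline{g_\tau}\Psi)*\F_\sigma(\nu_\tau),
\end{equation*}
with convolving kernels in $\mathscr{S}(\R^{2d})\subset L^1(\R^{2d})$. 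A direct dominated-convergence argument then gives $\F_\sigma(\mu)\in C_0\iff\F_\sigma(\nu_\tau)\in C_0$, closing (a).

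For (b), the pointwise identity $|g_\tau|\equiv 1$ yields $\|g_\tau g\|_{L^2(\mu)}=\|g\|_{L^2(\mu)}$, so applying Theorem \ref{QuantumBakTheorem} to the symbol $g_\tau g$ and invoking $L^\tau_{\F_\sigma(g\,d\mu)}=\E_W(g_\tau g)$ delivers the claim on the stated range of $p$. For (c), the extended Theorem \ref{Char-measure-compact} gives $L^\tau_{\F_\sigma(\mu)}\in\cS^p$ iff $\F_\sigma(\nu_\tau)\in L^p(\R^{2d})$, and Young's inequality applied to the convolution identities above shows $\F_\sigma(\nu_\tau)\in L^p\iff\F_\sigma(\mu)\in L^p$ with comparable norms. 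For $\mu$ the surface measure of a compact hypersurface with non-vanishing Gaussian curvature, the method of stationary phase (see e.g.\ \cite{Sogge}) gives $|\F_\sigma(\mu)(\zeta)|\leq C(1+|\zeta|)^{-(2d-1)/2}$, and the right-hand side lies in $L^p(\R^{2d})$ precisely when $p>4d/(2d-1)$. Note that (c) cannot be derived from (b): with $\alpha=\beta=2d-1$ in the hypersurface case, (b) only yields the weaker range $p>(4d+2)/(2d-1)$, so the sharper Fourier-analytic route via (a) is essential.

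The main obstacle is the extension of Theorem \ref{Char-measure-compact} to compactly supported complex Radon measures, since the auxiliary positive measure and the Plancherel-type identity used in its proof were formulated in the positive setting. The verification is routine but requires re-examining each step for complex total variation; once that is in hand, the three short reductions above complete the proof.
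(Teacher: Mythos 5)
Your proposal is correct and proves all three parts, but for parts (a) and (c) it takes a route that is cleaner than the paper's. Part (b) is identical to the paper's proof: both rewrite $L^\tau_{\F_\sigma(Gd\mu)}=\E_W(\F_\sigma(\sigma_{1-\tau})G)$, note $|\F_\sigma(\sigma_{1-\tau})|\equiv 1$, and apply Theorem~\ref{QuantumBakTheorem}. For part (a), the paper also reduces to Theorem~\ref{Char-measure-compact} via $L^\tau_{\F_\sigma(\mu)}=L_{\F_\sigma(\mu)*\sigma_{1-\tau}}$, but then establishes $\F_\sigma(\mu)*\sigma_{1-\tau}\in C_0 \iff \F_\sigma(\mu)\in C_0$ by smoothing against a Gaussian $G_0$, writing $\sigma_{1-\tau}*G_0=\F_\sigma(\F_\sigma(\sigma_{1-\tau})G_0)\in\mathscr{S}\subset L^1$, and invoking Lemma~\ref{Bounded to vanishing lemma} (which itself rests on Wiener's Tauberian theorem) to remove the Gaussian. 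Your cutoff trick---choosing $\Psi\in C_c^\infty$ with $\Psi\equiv 1$ on $\mathrm{supp}\,\mu$ so that both $\F_\sigma(\nu_\tau)=\F_\sigma(g_\tau\Psi)*\F_\sigma(\mu)$ and $\F_\sigma(\mu)=\F_\sigma(\overline{g_\tau}\Psi)*\F_\sigma(\nu_\tau)$ hold with Schwartz (hence $L^1$) kernels---lets you conclude directly from the elementary fact $L^1*C_0\subset C_0$, bypassing the Tauberian machinery entirely; it is in effect a $C_0$ analogue of the paper's own Lemma~\ref{lem:ControlFourMeas} and is the more economical argument. For part (c), the paper applies stationary phase directly to the oscillatory integral for $\F_\sigma(\nu_\tau)$ (chirp absorbed into the amplitude), whereas you apply stationary phase to $\F_\sigma(\mu)$ and transfer the $L^p$ bound to $\F_\sigma(\nu_\tau)$ by Young's inequality via the same convolution identities---both are valid, and your added remark that (b) alone only gives $p>(4d+2)/(2d-1)$ is a correct and useful sanity check on why the Fourier-analytic route is necessary. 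Finally, you are right to flag the extension of Theorem~\ref{Char-measure-compact} to compactly supported complex Radon measures as the one point requiring verification; the paper uses it implicitly without comment (its $\nu=\F_\sigma(\sigma_{1-\tau})d\mu$ is complex), and your observation that the only structural input, Theorem~\ref{TauberianTheorem}, already works for arbitrary $F\in L^\infty$ while the remaining steps (Lemma~\ref{Plancherelfor finite measures}, Werner--Young, Lemma~\ref{lem:ControlFourMeas}) go through verbatim for complex total variation is exactly the right justification.
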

\begin{proof}
    Since $L_{\F_\sigma(\mu)}^\tau=L_{\F_\sigma(\mu)*\sigma_{1-\tau}}$, where $\sigma_{1-\tau}$ is defined as in \eqref{tau-symbol}, it follows from Theorem \ref{Char-measure-compact} that $L_{\F_\sigma(\mu)}^\tau$ is compact if and only if $\F_\sigma(\mu)*\sigma_{1-\tau}\in C_0(\R^{2d})$. Let $G_0$ denote the $L^2$-normalised Gaussian on $\R^{2d}$.
    
    Assume first that $\F_\sigma(\mu)\in C_0(\R^{2d})$. Since $\mu$ is a compactly supported measure, we have that the measure
    \[
    d\nu=\F_\sigma(\sigma_{1-\tau})d\mu,
    \]
    is also a compactly supported finite measure as $|\F_\sigma(\sigma_{1-\tau})|=1$ by \eqref{Fourier-tau-symbol}, and thus
\[\F_\sigma(\mu)*\sigma_{1-\tau}=\F_\sigma(\F_\sigma(\sigma_{1-\tau})d\mu)=\F_\sigma(\nu)\in C_b(\R^{2d}).
    \]

    It follows from Lemma \ref{Bounded to vanishing lemma} that $\F_\sigma(\mu)*G_0\in C_0(\R^{2d})$ as $\F_\sigma(\mu)\in C_0(\R^{2d})$. In fact, by Wiener's Tauberian Theorem, Theorem \ref{TauberianTheorem}, we know that $\F_\sigma(\mu)*\Phi\in C_0(\R^{2d})$ for any $\Phi\in L^1(\R^{2d})$. Moreover, as
    $\F_\sigma(\sigma_{1-\tau})\in C^\infty(\R^{2d})$ and each derivative has at most polynomial growth, it follows that
    \[
    \sigma_{1-\tau}*G_0=\F_\sigma(\F_\sigma(\sigma_{1-\tau})G_0)\in\mathscr{S}(\R^{2d})\subset L^1(\R^{2d}),
    \]
    as $G_0\in \mathscr{S}(\R^{2d})$ and is invariant under the symplectic Fourier transform. In particular
    \[
    (\F_\sigma(\mu)*\sigma_{1-\tau})*G_0=\F_\sigma(\mu)*\F_\sigma(\F_\sigma(\sigma_{1-\tau})G_0)\in C_0(\R^{2d}),
    \]
     and since $\F_\sigma(\sigma_{1-\tau})d\mu$ is a compactly supported measure, it follows from Lemma \ref{Bounded to vanishing lemma} that \[\F_\sigma(\F_\sigma(\sigma_{1-\tau}d\mu))=\F_\sigma(\mu)*\sigma_{1-\tau}\in C_0(\R^{2d}).\] 
    Theorem \ref{Char-measure-compact} then gives that $L_{\F_\sigma(\mu)}^\tau$ is a compact operator.

    If $L_{\F_\sigma(\mu)}^\tau$ is compact, then $\F_\sigma(\mu)*\sigma_{1-\tau}\in C_0(\R^{2d})$ by Theorem \ref{Char-measure-compact}. This implies that
    \[(\F_\sigma(\mu)*\sigma_{1-\tau})*g_0\in C_0(\R^{2d}),\]
    by Lemma \ref{Bounded to vanishing lemma}. The same conclusion also holds whenever $G_0$ is replaced by any $\Phi\in L^1(\R^{2d})$ by Wiener's Tauberian Theorem. Note that $\F_\sigma(\sigma_{1-\tau})^{-1}=\overline{\F_\sigma(\sigma_{1-\tau})}$
    and since $\overline{\F_\sigma(\sigma_{1-\tau})}G_0\in\mathscr{S}(\R^{2d})$
    it follows that
    \[
    \F_\sigma(\mu)*G_0=\F_\sigma(\mu)*\F_\sigma\left(\F_\sigma(\sigma_{1-\tau})\overline{\F_\sigma(\sigma_{1-\tau})}G_0\right)=(\F_\sigma(\mu)*\sigma_{1-\tau})*\F_\sigma(\overline{\F_\sigma(\sigma_{1-\tau})}G_0)\in C_0(\R^{2d}).
    \]
    Lemma \ref{Bounded to vanishing lemma} therefore gives us that $\F_\sigma(\mu)\in C_0(\R^{2d})$. This concludes part $a)$ of the theorem.

    For part $\mathrm{b})$ recall that 
    \[
    L_{\F_\sigma(Gd\mu)}^\tau=\E_W(\F_\sigma(\sigma_{1-\tau})G).
    \]
    Since $|\F_\sigma(\sigma_{1-\tau})|=1$, it follows that
    \[
    \F_\sigma(\sigma_{1-\tau})G\in L^2(\mu),
    \]
    whenever $G\in L^2(\mu)$. The result then follows from Theorem \ref{QuantumBakTheorem}.

    For part $\mathrm{c})$ it is enough to consider the measure $d\nu=\F_\sigma(\sigma_{1-\tau})d\mu$. Then $|\F_\sigma(\nu)(\zeta)|\leq C(1+ |\zeta|)^{-\frac{2d-1}{2}}$ by the method of stationary phase, see for example \cite[Cor. 4.16]{Schlag}. The second part of Theorem \ref{Char-measure-compact} gives
    \[
    L_{\F_\sigma(\mu)}^\tau=L_{\F_\sigma(\nu)}\in \cS^{p},
    \]
    for $p> 4d/(2d-1)$.
\end{proof}


\section*{Acknowledgement}
The authors would like to thank Sigrid Grepstad and Eugenia Malinnikova for their helpful comments on earlier versions of this manuscript. The authors also appreciate the feedback given by  Robert Fulchse and an anonymous referee which greatly helped to improve the manuscript.

\printbibliography

\end{document}